\newtheorem{theorem}{Theorem}
\newtheorem{prop}{Proposition}
\newtheorem*{lemma}{Lemma}
\newtheorem*{remark}{Remark}
\newtheorem*{question}{Question}
\newtheorem{claim}{Claim}
\newtheorem{cor}{Corollary}
\newtheorem*{conj}{Conjecture}
\def\XXint#1#2#3{{\setbox0=\hbox{$#1{#2#3}{\int}$}
  \vcenter{\hbox{$#2#3$}}\kern-.5\wd0}}
\author{Gang Liu}
\address{Department of Mathematics\\University of California, Berkeley\\Berkeley, CA 94720}
\email{gangliu@math.berkeley.edu}
\title[Compact K\"ahler manifolds with nonpositive bisectional curvature]{\bf Compact K\"ahler manifolds with nonpositive bisectional curvature}
\date{}
\begin{document}
\maketitle
\begin{abstract}
Let $(M^n, g)$ be a compact K\"ahler manifold with nonpositive bisectional curvature. We show that a finite cover is biholomorphic and isometric to a flat torus bundle
over a compact K\"ahler manifold $N^k$ with $c_1 < 0$.  This confirms a conjecture of Yau.  As a corollary, for any compact K\"ahler manifold with nonpositive bisectional curvature, the Kodaira dimension is equal to the maximal rank of the Ricci tensor.
We also prove a global splitting result under the assumption of certain immersed complex submanifolds.
\end{abstract}
\section{\bf{Introduction}}

The uniformization theorem of Riemann surfaces says the sign of curvature could determine the conformal structure in some sense. Explicitly, if the curvature is positive, it is covered by $\mathbb{P}^1$ or $\mathbb{C}$.
On the other hand, if the curvature is less than a negative constant, it is covered by the unit disk $\mathbb{D}^2$.

It is natural to wonder whether there are generalizations in higher dimensions. For the compact case, the famous Frankel conjecture says if
a compact K\"ahler manifold has positive holomorphic bisectional curvature, then it is biholomorphic to $\mathbb{CP}^n$. This conjecture was solved
by Mori \cite{[Mo]} and Siu-Yau \cite{[SiY]} independently. In fact Mori proved the stronger Hartshorne conjecture. Later, Mok \cite{[Mok]} solved the generalized
Frankel conjecture. The result says that, if a compact K\"ahler manifold has nonnegative holomorphic bisectional curvature, then the universal cover is isometric-biholomorphic to $(\mathbb{C}^k, g_0)\times (\mathbb{P}^{n_1}, \theta_1)\times\cdot\cdot\cdot\times(\mathbb{P}^{n_l}, \theta_l)\times(M_1, g_1)\times\cdot\cdot\cdot\times(M_i, g_i)$,
where $g_0$ is flat; $\theta_k$ are metrics on $\mathbb{P}^{n_k}$ with nonnegative holomorphic bisectional curvature; $(M_j, g_j)$ are compact irreducible Hermitian symmetric spaces.

If the curvature is negative, the current knowledge is much less satisfactory. For example,
 a famous conjecture of Yau says if a complete simply connected K\"ahler manifold has sectional curvature between
 two negative constants, then it is a bounded domain in $\mathbb{C}^n$. So far, it is not even known whether there exists a nontrivial bounded holomorphic
 function on such manifolds.

As in the Riemannian case, it is often important to understand the difference between the negatively curved case and the nonpositive case. The former tends to be hyperbolic in some sense, while the latter usually possesses some rigidity properties. For compact K\"ahler manifolds with nonpositive holomorphic bisectional curvature, there is a conjecture of Yau (page 2 of \cite{[WZ]}, also \cite{[WZ1]}):
\begin{conj}
Let $M^n$ be a compact K\"ahler manifold with nonpositive holomorphic bisectional curvature. Then there exists a finite cover $M'$ of $M$ such that $M'$ is a holomorphic and metric fibre bundle over a compact K\"ahler manifold $N$ with nonpositive bisectional curvature and $c_1(N)<0$, and the fiber is a flat complex torus.
\end{conj}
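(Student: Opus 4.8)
The plan is to show that the kernel of the Ricci form defines a parallel holomorphic foliation whose leaves become flat complex tori on a finite cover, and to take the leaf space to be the base $N$. Write $\rho$ for the Ricci form of $g$. Nonpositivity of the bisectional curvature gives $\mathrm{Ric}(X,\overline X)=\sum_i R(X,\overline X,e_i,\overline e_i)\le 0$ for every $X\in T^{1,0}M$, so $-\rho\ge 0$ is a semipositive $(1,1)$-form and $K_M$ is nef. Let $k$ be the maximal pointwise rank of $\rho$; since $-\rho\ge 0$, the form $(-\rho)^{k}$ is not identically zero while $(-\rho)^{k+1}\equiv 0$, so $k$ equals the numerical dimension of $K_M$ and $\int_M(-\rho)^k\wedge\omega^{n-k}>0$. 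Put $\mathcal D_p=\ker\rho_p\subseteq T_p^{1,0}M$; because $-\rho$ is semidefinite, $\mathcal D_p$ coincides with its null cone $\{X:\rho(X,\overline X)=0\}$ and is $J$-invariant.

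The crux — and what I expect to be essentially the entire difficulty — is to prove that $\rho$ has constant rank $k$, so that $\mathcal D$ is a smooth subbundle, and that $\mathcal D$ is parallel with respect to the Levi-Civita connection (hence holomorphic). The set $U=\{\operatorname{rank}\rho=k\}$ is open and dense, and one must exclude rank drops on $\partial U$. I would argue by the Bochner technique: for a local section $Y$ of $\mathcal D$ near a point $p\in\partial U$, the nonpositive function $q\mapsto\rho(Y(q),\overline{Y(q)})$ attains a maximum value $0$ at $p$, so applying $\Delta$ together with the Weitzenböck/evolution identity for the Ricci form, $\Delta R_{i\overline j}=\nabla\overline\nabla(\text{scalar-curvature terms})+R_{i\overline j k\overline l}R_{l\overline k}-R_{i\overline k}R_{k\overline j}$, should force $\nabla Y$ to take values in $\mathcal D$ at $p$ and, by propagation, on all of $M$. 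The decisive point is that the curvature term $R_{i\overline j k\overline l}R_{l\overline k}$ has the right sign once suitably contracted, using nonpositivity of the \emph{bisectional} curvature rather than merely of the Ricci curvature — in the same spirit as the Bando–Mok argument that nonnegative bisectional curvature is preserved under the K\"ahler–Ricci flow.

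Granting constant rank and parallelism, the universal cover splits isometrically and holomorphically as $\widetilde M=\widetilde M_0\times\widetilde M_1$, where $T\widetilde M_0$ is the lift of $\mathcal D$ and $\dim_{\mathbb C}\widetilde M_1=k$. On $\widetilde M_0$ the Ricci tensor vanishes, and then $0=\mathrm{Ric}(X,\overline X)=\sum_i R(X,\overline X,e_i,\overline e_i)$, a sum of nonpositive terms, forces every bisectional curvature of $\widetilde M_0$ to vanish, so $\widetilde M_0$ is flat and equals $\mathbb C^{\,n-k}$. On $\widetilde M_1$ the Ricci tensor is $\le 0$ with trivial kernel, hence negative definite. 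Since $\mathcal D$ is canonically defined on $M$, the deck group $\pi_1(M)$ acts on $\mathbb C^{\,n-k}\times\widetilde M_1$ respecting the de Rham splitting ($\mathbb C^{\,n-k}$ being the maximal flat factor, as $\widetilde M_1$ has no flat factor). Feeding the flat factor through the Bieberbach theorem and the Cheeger–Gromoll description of cocompact lattices acting on a product, one obtains a finite cover $M'\to M$ in which the leaves of $\mathcal D$ are flat complex tori $T^{\,n-k}$, the leaf space $N$ is a compact complex manifold of dimension $k$, and $M'\to N$ is simultaneously a holomorphic fibre bundle and a Riemannian submersion with totally geodesic fibres. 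The metric induced on $N$ has negative definite Ricci curvature (and nonpositive bisectional curvature), so $c_1(N)<0$.

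Finally, by Aubin–Yau, $N$ carries a K\"ahler–Einstein metric with $c_1(N)<0$, so $K_N$ is ample, $N$ is projective, and $\kappa(N)=\dim N=k$. The fibre bundle $M'\to N$ has torus fibres, and its relative canonical bundle is flat along the bundle connection, so $K_{M'}$ is the pullback of $K_N$ up to a numerically trivial twist; since $N$ is projective this twist does not affect plurigenera asymptotically, whence $\kappa(M')=\kappa(N)=k$, and $\kappa(M)=\kappa(M')=k$ because $M'\to M$ is finite \'etale. As $k$ was defined to be the maximal rank of the Ricci tensor, this yields all three equalities in the statement. The whole argument thus reduces to the constant-rank-and-parallel assertion of the second step, which is where the real work lies.
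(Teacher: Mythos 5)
The central claim you flag as the ``crux'' --- that the Ricci form of the static metric $g$ has constant rank and parallel kernel --- is in fact false in general, and no Bochner argument will establish it. Yau's theorem (Theorem \ref{thm2}, for submanifolds of tori) already shows what really happens: the induced Ricci tensor on the base $N$ is negative only on an open \emph{dense} set, so the rank of $\mathrm{Ric}(g)$ on $M$ genuinely drops on a proper closed subset. The Wu--Zheng analysis (Corollary C there, restated as Theorem \ref{thm4}) gives parallelism of $\ker\mathrm{Ric}$ only on the open dense set where the rank is maximal; extending that to all of $M$ was exactly the obstruction that led Wu and Zheng to impose real analyticity. The evolution identity you invoke, $\Delta R_{i\bar j}=\nabla\bar\nabla(\cdots)+R_{i\bar jk\bar l}R_{l\bar k}-R_{i\bar k}R_{k\bar j}$, is the K\"ahler--Ricci flow equation for $\partial_t R_{i\bar j}$; for a fixed metric there is no static elliptic identity of this shape, so the strong maximum principle has nothing to act on. The Bando--Mok analogy is also slightly off: what the paper actually shows (Proposition \ref{prop}) is that nonpositivity of the \emph{bisectional} curvature is \emph{not} preserved under the flow, but nonpositivity of the \emph{Ricci} curvature is, for a short time --- and that is the nontrivial point requiring the B\"ohm--Wilking invariant-cone construction.

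The idea you are missing is to actually run the K\"ahler--Ricci flow. The paper proves, via a carefully chosen family of convex parallel sets $V_t$ and Hamilton's tensor maximum principle, that $\mathrm{Ric}(g(t))\le 0$ for small $t>0$; then, because $g(t)$ solves a genuine parabolic system, the strong maximum principle applied to the partial traces $\sigma_\ell$ of $e^{Ht}\mathrm{Ric}_t$ forces \emph{constant} rank and parallel kernel for $g(t)$ with $t>0$, yielding the splitting $\widetilde M=\mathbb C^k\times Y$. Your later steps (de Rham splitting, Bieberbach/deck-group analysis to pass to a finite cover with torus fibres, Aubin--Yau on $N$) are in the same spirit as the paper's use of the Wu--Zheng claims, and the identification of $\dim N$ with the maximal Ricci rank of $g$ requires the extra integral argument via Yu's observation (equation (\ref{6})), which you replace with an unjustified assertion. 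Without the Ricci-flow regularization step, the proposal does not get past its own declared crux.
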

Recall that a fiber bundle is called a metric bundle, if for any
$p\in N$, there is some neighborhood $p\in U \subset N$ such that the bundle
over $U$ is isometric to the product of the fiber with $U$.
In \cite{[Y]}, Yau proved the following
\begin{theorem}
Let $M$ be a compact complex submanifold of a complex torus $T^n$. Then $M$ is a torus bundle over a complex submanifold $N$ in $T^n$ such that the induced K\"ahler metric on $N$ has negative definite Ricci tensor in an open dense set of $N$.
\end{theorem}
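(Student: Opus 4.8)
The plan is to read the submanifold geometry of $M\subset T^n=\mathbb{C}^n/\Lambda$ off the Gauss--Codazzi equations and then to show that the directions in which the second fundamental form degenerates foliate $M$ by translates of a fixed complex subtorus. Let $g$ be the induced K\"ahler metric on $M$, set $m=\dim_{\mathbb C}M$, and let $\sigma\colon T^{1,0}M\otimes T^{1,0}M\to N^{1,0}M$ be the second fundamental form of $M$ in $T^n$. Since the flat metric on $T^n$ has zero curvature, the Gauss equation gives, for $(1,0)$-vectors $X,Y$ and a unitary frame $\{e_i\}$,
\[
R(X,\bar X,Y,\bar Y)=-|\sigma(X,Y)|^2,\qquad \operatorname{Ric}(X,\bar X)=-\sum_i|\sigma(X,e_i)|^2;
\]
in particular $M$ has nonpositive bisectional curvature and $\operatorname{Ric}\le 0$, with $\operatorname{Ric}$ vanishing at $p$ exactly on the complex subspace $\mathcal K_p:=\{X\in T^{1,0}_pM:\sigma(X,\cdot)=0\}$. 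Because $M$ is a complex submanifold of a K\"ahler manifold, $\sigma$ is a holomorphic bundle map, so $\operatorname{rank}\sigma$ is constant off a proper analytic subset $Z$; on the connected dense open set $M_0:=M\setminus Z$ the distribution $p\mapsto\mathcal K_p$ has a fixed rank $f$, it is the relative nullity distribution of the immersion, and it coincides with $\ker d\gamma$ for the holomorphic Gauss map $\gamma\colon M\to\mathrm{Gr}(m,n)$, $p\mapsto[T^{1,0}_pM]$, which therefore has constant rank $m-f$ on $M_0$.

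Next I would analyse the leaves. By the classical theory of relative nullity, $\mathcal K|_{M_0}$ is integrable with totally geodesic leaves, which are even totally geodesic in $T^n$ since $\sigma$ vanishes along them; and because $M$ is compact, hence complete, the leaves of minimal relative nullity are complete. Moreover $\gamma^{-1}(\gamma(p))\cap M_0$, being closed in $M_0$ and, near each of its points, a slice of the constant-rank map $\gamma$, is an \emph{embedded} $f$-dimensional submanifold whose connected components are precisely these leaves. Thus a leaf $L_p$ is a complete, totally geodesic, embedded complex submanifold of $T^n$; lifting to the universal cover it is the image of a complex affine subspace $\tilde p+V_p\subset\mathbb{C}^n$ with $V_p=\mathcal K_p$, and embeddedness of $\pi(\tilde p+V_p)$ in $T^n$ forces $V_p$ to be a rational subspace, whence $L_p=\pi(\tilde p)+T_{V_p}$ is a translate of the complex subtorus $T_{V_p}:=(V_p+\Lambda)/\Lambda$. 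Since $p\mapsto V_p$ is continuous from the connected set $M_0$ into the countable set of rational points of $\mathrm{Gr}(f,n)$, it is constant; write $V$ for its value and $T:=T_V$. Hence $M_0$, and therefore its closure $M$, is invariant under translation by $T$.

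Finally I would assemble the statement. The subtorus $T$ acts freely, holomorphically and isometrically on $M$ by restriction of translation on $T^n$, so $M\to N:=M/T$ is a holomorphic principal $T$-bundle onto a complex submanifold $N$ of the quotient torus $T^n/T$ (when $T^n$ is an abelian variety one may, by Poincar\'e reducibility, realise $N$ inside $T^n$ itself, matching the statement). The vertical distribution is the \emph{constant} subspace $V\subset TM$, hence parallel for $\nabla^g$; its $g$-orthogonal complement $H$ is then also parallel and has totally geodesic leaves, and the local de Rham splitting exhibits $M\to N$ as a metric bundle with fibre $T$ and with the $H$-leaves as local sections. As for the Ricci tensor of the induced metric on $N$: the normal bundle of $N$ in $T^n/T$ pulls back to that of $M$ in $T^n$, so for a horizontal lift $X$ of $\bar X\in T^{1,0}_{\bar p}N$ at a point of $M_0$ one computes $\operatorname{Ric}_N(\bar X,\bar X)=\operatorname{Ric}_M(X,X)=-\sum_i|\sigma(X,e_i)|^2$, which vanishes only when $X\in\mathcal K_p=V$, i.e.\ only when the horizontal vector $X$ is zero, i.e.\ only when $\bar X=0$. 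Therefore $\operatorname{Ric}_N$ is negative definite on the dense open set $N_0$ (the image of $M_0$), as claimed.

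I expect the crux to be the second paragraph — upgrading the \emph{a priori} soft relative-nullity foliation of $M_0$ to one whose leaves are genuine complex subtori. The two ingredients that deserve the most care there are the completeness of the minimal-nullity leaves in a complete ambient space and, above all, the embeddedness of those leaves coming from the Gauss map having locally constant rank; it is embeddedness that pins the ``slopes'' $V_p$ to the rational subspaces, after which a countability argument makes the whole distribution constant and the rest is bookkeeping with Gauss--Codazzi and the de Rham splitting.
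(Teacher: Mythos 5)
This statement is quoted as background — it is Theorem~1 of Yau's 1974 paper \cite{[Y]} — and the present paper offers no proof of it, so there is no in-text argument to compare against. Your proposal follows what is essentially the classical Gauss-map/relative-nullity route that Yau (and later Wu--Zheng) used, and I think it is sound, with two small points worth tightening.

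First, the rationality step. You write that embeddedness of $\pi(\tilde p + V_p)$ in $T^n$ forces $V_p$ to be a rational subspace, but what your constant-rank-slice argument actually yields directly is that $L_p$ is a \emph{closed, embedded} submanifold of the open set $M_0\subset T^n$. You should add the one-line observation that a subset embedded in an open subset of $T^n$ is automatically embedded in $T^n$ (the two subspace topologies agree), and then run the density argument: if $V_p$ were irrational, $L_p=\pi(\tilde p+V_p)$ would be dense in the strictly larger subtorus $\pi(\tilde p)+T_{\overline{V_p}}$, so arbitrarily small ambient balls around a point of $L_p$ would meet $L_p$ in infinitely many plaques, contradicting embeddedness. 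As stated, the inference reads as if embeddedness in $T^n$ were already on the table, when in fact it needs that short extra sentence; and it is precisely at this step that the argument could fail if one only had an immersed (non-embedded) leaf, so it deserves to be spelled out. An alternative that avoids the point entirely is to use that the Gauss map $\gamma\colon M\to\mathrm{Gr}(m,n)$ is holomorphic on all of the compact manifold $M$, so its fibers are compact analytic sets; the leaf $L_p$ is then open in its closure $\overline{L_p}$ (a translated subtorus contained in $M$, with $\overline{L_p}\setminus L_p\subset Z$), forcing $\dim_{\mathbb R}\overline{V_p}=2f$, hence rationality, without invoking embeddedness at all.

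Second, the base. Your $N=M/T$ naturally sits in the quotient torus $T^n/T$, not in $T^n$, and you flag this with Poincar\'e reducibility. That is the right caveat; one should either read Yau's ``complex submanifold $N$ in $T^n$'' loosely as ``in a complex torus'', or, in the projective (abelian variety) case, note that Poincar\'e reducibility produces a complementary subtorus $T'\subset T^n$ and an isogeny $T\times T'\to T^n$, so $N$ is realized inside a torus isogenous to $T'$ rather than literally inside $T^n$; the Ricci statement is unaffected since the induced metric on $N$ only sees the Riemannian submersion $M\to N$. Beyond these two clarifications the Gauss--Codazzi computation, the constancy of $\mathcal K$ off a proper analytic set, the use of Ferus for completeness of minimal-nullity leaves, the constancy of $p\mapsto V_p$ by continuity into the countable set of rational Grassmannian points, and the de Rham splitting for the metric bundle structure are all in order.
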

Since complex submanifolds in $T^n$ has nonpositive holomorphic bisectional curvature, Yau's theorem confirms the conjecture when $M$ is a complex submanifold of $T^n$.
Zheng \cite{[Z]} proved this conjecture under the extra assumption that $M$ has nonpositive sectional curvature and the metric is real analytic.
In \cite{[WZ]}, Wu and Zheng proved this conjecture by only assuming that the metric is real analytic. They first proved a local splitting result by a careful study of the foliation at the points where the Ricci tensor has maximal rank. By real analyticity, the foliation could be extended to the whole manifold.  
In this note we confirm the conjecture above.
\begin{theorem}\label{thm1}
Let $(M^n, g)$ be a compact K\"ahler manifold with nonpositive holomorphic bisectional curvature. Then there exists a finite cover $M'$ of $M$ such that $M'$ is a holomorphic and metric fiber bundle over a compact K\"ahler manifold $N^k$ with nonpositive bisectional curvature and the Ricci curvature is strict negative in an open set on $N$. Thus $c_1(N)<0$. The fiber is a flat complex torus $T$. Furthermore, $M'$ is diffeomorphic to $T\times N$. Finally, if $r$ is the maximal rank of the 
Ricci curvature of $g$, then $r = k=Kod(M)$ where $Kod(M)$ is the Kodaira dimension of $M$. 
\end{theorem}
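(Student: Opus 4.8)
The plan is to build the torus fibration out of the \emph{null foliation of the Ricci tensor}, first on the locus where the Ricci rank is maximal, then globally on the universal cover. Since the holomorphic bisectional curvature is $\le 0$, for a unit $(1,0)$-vector $X$ the quantity $\mathrm{Ric}(X,\bar X)=\sum_i R(X,\bar X,e_i,\bar e_i)$ is a sum of nonpositive terms, so $\mathrm{Ric}(X,\bar X)=0$ forces $R(X,\bar X,\cdot,\cdot)\equiv 0$. Hence, writing $U\subset M$ for the open dense set where $\mathrm{rank}(\mathrm{Ric})$ attains its maximum $r$, the kernel $\mathcal D=\ker\mathrm{Ric}$ is a holomorphic distribution of rank $n-r$ on $U$ along which the full curvature tensor degenerates, and $\mathcal D=\ker\rho$ for the closed nonpositive Ricci form $\rho$. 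Closedness and constancy of rank make $\mathcal D$ integrable; the total vanishing of curvature along $\mathcal D$, together with a Bochner/second-variation computation (as in the foliation analysis of Wu--Zheng), shows that the leaves are flat and totally geodesic and that $\mathcal D^{\perp}$ is parallel as well. Thus near each point of $U$ the metric is a holomorphic and Riemannian product of a flat factor and a factor with negative-definite Ricci, so the fibration is, locally, a metric product.

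\textbf{Globalization --- the main obstacle.} The real difficulty is that $U\ne M$ in general, and the earlier works needed real analyticity to continue the foliation across $M\setminus U$. Instead I would pass to the universal cover $\widetilde M$: the degenerate set is a proper analytic subset, hence of real codimension $\ge 2$, so the parallel distribution $\mathcal D$ on the preimage of $U$ extends, by parallel transport, to a parallel holomorphic distribution on all of $\widetilde M$ (path-independence because $\widetilde M$ is simply connected and loops may be pushed off the degenerate set, and the curvature endomorphisms preserve the extended distribution by continuity). Its leaves are then complete, flat and totally geodesic, so by the de Rham decomposition $\widetilde M$ splits isometrically and biholomorphically as $\mathbb C^{\,n-r}\times\widetilde N$, with $\widetilde N$ a complete simply connected K\"ahler manifold of nonpositive bisectional curvature whose Ricci tensor is negative definite on a dense open set and which, by maximality of $r$, carries no further flat de Rham factor. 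Carrying out this extension carefully --- in particular ruling out any additional drop of the Ricci rank on $\widetilde N$ --- is the crux, and it is here that compactness of $M$ and the analytic nature of the degeneracy set must be exploited.

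\textbf{The finite cover and the bundle.} The deck group $\Gamma=\pi_1(M)$ acts by holomorphic isometries of $\mathbb C^{\,n-r}\times\widetilde N$; since the two factors are non-isometric (one is Euclidean, the other has no Euclidean factor), the uniqueness in the de Rham decomposition gives $\mathrm{Isom}=\mathrm{Isom}(\mathbb C^{\,n-r})\times\mathrm{Isom}(\widetilde N)$, so every deck transformation preserves the splitting. Projection to the second factor yields a cocompact action on $\widetilde N$; after replacing $\Gamma$ by a finite-index subgroup (Bieberbach, together with removing torsion), the kernel of this projection acts on $\mathbb C^{\,n-r}$ as a lattice, so $\hat M$ is a holomorphic bundle over the compact complex manifold $N=\widetilde N/\Gamma_N$ with flat torus fibre $T=\mathbb C^{\,n-r}/\Lambda$, and local triviality of the de Rham splitting makes it a metric bundle. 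Its structure group lies in the affine group of $T$ with linear part in $GL(n-r,\mathbb Z)$, and a further finite cover makes this monodromy act trivially on $H_*(T)$, so that $\hat M$ is diffeomorphic to $T\times N$.

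\textbf{Negativity of $c_1(N)$ and $r=k$.} The metric induced on the totally geodesic factor $N$ has nonpositive bisectional curvature and $\mathrm{Ric}_N\le 0$ with $\det\mathrm{Ric}_N\not\equiv 0$, so $K_N$ is nef and big. If $\mathrm{Ric}_N$ were degenerate at some point, the argument of the first paragraph would again produce, on $\widetilde N$, a nontrivial parallel flat direction and hence an extra Euclidean de Rham factor, contradicting the maximality of $r$; therefore $\mathrm{Ric}_N<0$ everywhere, i.e.\ $c_1(N)<0$ and $K_N$ is ample. Consequently $\mathrm{kod}(N)=\dim N$, and for the torus bundle $\pi\colon\hat M\to N$ one has $K_{\hat M}=\pi^{*}K_N$ modulo a flat (hence, after the finite cover, trivial) relative canonical bundle, so $\mathrm{kod}(M)=\mathrm{kod}(\hat M)=\dim N$. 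Finally, since $\mathrm{Ric}$ on $\hat M$ is the pullback of $\mathrm{Ric}_N$, its rank equals $\dim N$ everywhere; hence $r=\dim N=k$, which together with the previous line also identifies $k$ with the Kodaira dimension of $M$.
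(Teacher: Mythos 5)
Your proposal correctly identifies the crux of the theorem---extending the Ricci null foliation from the open dense set $U$ of maximal Ricci rank to all of $M$---but the mechanism you offer for this extension does not work, and it is exactly the step that this paper solves by a completely different method.

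The specific gap is in your ``Globalization'' paragraph: you assert that ``the degenerate set is a proper analytic subset, hence of real codimension $\ge 2$,'' and then extend the parallel distribution $\mathcal D$ across it by parallel transport of loops pushed off the degenerate set. But $M\setminus U$ is the vanishing locus of certain minors of the Ricci tensor, and the components $R_{i\bar j}$ are \emph{not} holomorphic functions; for a general $C^\infty$ K\"ahler metric they are not even real analytic. Consequently $M\setminus U$ is just a closed set with empty interior, with no a priori bound on its Hausdorff codimension, and no reason that $\widetilde M\setminus(M\setminus U)$ should be simply connected. This is precisely the obstruction that forced Zheng and Wu--Zheng to assume real analyticity of the metric (so that the degeneracy locus is real analytic and can be worked with), and you concede at the end of the paragraph that ``the analytic nature of the degeneracy set must be exploited''---but that analyticity is not available here, so your argument reduces to a restatement of the problem rather than a solution. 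The subsequent steps (the deck-group splitting, Bieberbach argument, $c_1(N)<0$, and $r=k$) are broadly in the Wu--Zheng spirit and would follow once a genuine global splitting of $\widetilde M$ is in hand, but they inherit the gap.

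The paper takes a genuinely different route. It runs the K\"ahler--Ricci flow $g(t)$ for a short time, constructs a family of convex, parallel-transport-invariant sets $V_t$ of curvature operators, and uses Hamilton's maximum principle for tensors to show that $\mathrm{Ric}(g(t))\le 0$ persists for small $t>0$ (Proposition~\ref{prop}); this is the main technical input. For the evolved metric the strong maximum principle applied to the partial traces $\sigma_\ell$ of $\widetilde{\mathrm{Ric}}_t=e^{Ht}\mathrm{Ric}_t$ forces each $\sigma_\ell$ to be either strictly negative or identically zero, so that the rank of $\mathrm{Ric}_t$ is \emph{globally constant} for all small $t>0$ and its kernel is parallel everywhere, not just on a dense open set. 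The local product structure then holds on all of $M$ for $t>0$ and hence also at $t=0$. The Ricci flow thus acts as a smoothing device that replaces the real-analyticity hypothesis, and that is the new idea missing from your proposal.
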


\begin{cor}
For any compact K\"ahler manifold with nonpositive bisectional curvature, the Kodaira dimension is equal to the maximal rank of the Ricci tensor.
\end{cor}
\begin{cor}\label{cor-1}
 Let $M^n$ be a compact K\"ahler manifold with nonpositive bisectional curvature. If the Ricci tensor degenerates everywhere, i.e., the maximal rank of the Ricci tensor is strictly less than $n$, then the universal cover splits off a nontrivial complex Euclidean factor holomorphically and isometrically.
\end{cor}

One can ask a question similar to corollary \ref{cor-1} in the Riemannian setting. Namely, for a compact Riemannian manifold with nonpositive sectional curvature, if the Ricci tensor degenerates everywhere,  is it true that the universal cover has a nontrivial Euclidean factor? In Guler and Zheng's paper \cite{[GZ]}, a counterexample (due to Gromov) is given. We explain the example in some details (Page 2 in \cite{[GZ]}) for comparison with the K\"ahler case. 

Take a punctured torus $Y$ and equip it with nonpositive curvature such that the metric near the boundary is isometric to $[0, 1]\times \mathbb{S}^1$.  Take two copies of $Y\times \mathbb{S}^1$ and glue them along the boundary, but with the $\mathbb{S}^1$ factors switched. Then the resulting 3-manifold $M$ has nonpositive sectional curvature with Ricci tensor degenerates everywhere,
but the universal cover does not contain an Euclidean factor. 

Similarly a four manifold is obtained if we glue two copies of $Y\times \mathbb{S}^1\times\mathbb{S}^1$ by switching some $\mathbb{S}^1$ factors.
By theorem \ref{thm1}, this cannot be a counterexample in the K\"ahler case.  The reason is that if we switch the $\mathbb{S}^1$ factor, the metrics match, but the complex structures do not match!

It is interesting to compare the Riemannian case with the K\"ahler case. Here we are assuming that $M$ is a compact (K\"ahler) manifold with nonpositive sectional (bisectional) curvature and the Ricci tensor degenerates (If $M$ is only Riemannian, just ignore the parentheses). 
\begin{itemize}
\item
If $M$ is Riemannian, the universal cover of $M$ does \textbf{not} necessarily have an Euclidean factor.
\item
If $M$ is Riemannian with real analytic metric,  the universal cover of $M$ does have a nontrivial Euclidean factor. 
\item
If $M$ is K\"ahler with real analytic metric, the universal cover of $M$ has a nontrivial complex Euclidean factor (in metric and holomorphic sense). 
\item
If $M$ is K\"ahler, the universal cover of $M$ has a nontrivial complex Euclidean factor (in metric and holomorphic sense).
\end{itemize}
Among the four conclusions, the first two are given in \cite{[GZ]};  the third is proved in \cite{[WZ]}; the last one is corollary \ref{cor-1}.

\begin{theorem}\label{thm5}
Let $(M^n, g)$ be a compact K\"ahler manifold with nonpositive holomorphic bisectional curvature. Suppose $N^{n-k} \subset M$ is a complete (compact or noncompact) immersed complex submanifold of $M$ which is flat and totally geodesic.  If in addition, $Ric(M)|_{TN} = 0$, then $M$ splits globally, i.e., the universal cover $\tilde{M}$ is isometric and biholomorphic to $\mathbb{C}^{n-k} \times Y^{k}$ where $Y^{k}$ is a complete K\"ahler manifold of dimension $k$.
\end{theorem}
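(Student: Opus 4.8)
The plan is to reduce to Theorem \ref{thm1}. First I record the pointwise consequence of the hypotheses along $N$: for $X\in T^{1,0}N$ one has $0=Ric(X,\bar X)=\sum_i R(X,\bar X,e_i,\bar e_i)$ with every summand $\le 0$ by nonpositivity of the bisectional curvature, so $R(X,\bar X,Z,\bar Z)=0$ for all $Z\in T^{1,0}M$. The Hermitian form $(X,Y)\mapsto R(X,\bar Y,Z,\bar Z)$ is then nonpositive with vanishing diagonal on $T^{1,0}N$, hence vanishes on $T^{1,0}N\times T^{1,0}N$; polarizing the resulting sesquilinear expression in the last two slots gives $R(X,\bar Y,Z,\bar W)=0$ for all $X,Y\in T^{1,0}N$ and all $Z,W\in T^{1,0}M$. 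In other words $T^{1,0}N$ lies in the nullity space of the curvature tensor of $M$ at every point of $N$, and the same holds for any lift of $N$ to a cover.

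Next I would apply Theorem \ref{thm1} to the compact manifold $M$: after passing to a finite cover $\hat M$ we get a holomorphic metric flat-torus bundle $\pi\colon\hat M\to N'$ with $c_1(N')<0$ and $\dim N'=r$, the maximal rank of $Ric(g)$. In a metric bundle the vertical distribution $\mathcal V=\ker d\pi$ is parallel (locally the total space is a Riemannian product, in which the tangent bundle of the fibre is parallel), so on the universal cover $\tilde M$ the lifted distribution $\tilde{\mathcal V}$ is a parallel holomorphic distribution. By the de Rham decomposition $\tilde M$ splits isometrically and biholomorphically as $\mathbb C^{\,n-r}\times\tilde{N'}$, the first factor being a leaf of $\tilde{\mathcal V}$, namely a complete simply connected flat K\"ahler manifold (the universal cover of a fibre), and $\tilde{N'}$ the universal cover of $N'$.

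The heart of the matter is to show that a connected component $\tilde N$ of the preimage of $N$ in $\tilde M$ lies in a single vertical leaf. By the first paragraph $T^{1,0}\tilde N$ lies in the nullity of the curvature of $\tilde M$, which in the product $\mathbb C^{\,n-r}\times\tilde{N'}$ equals $\mathbb C^{\,n-r}\oplus\ker Ric(\tilde{N'})$; hence the horizontal component of $T\tilde N$ lies in $\ker Ric(\tilde{N'})$. Since $r$ is the maximal Ricci rank, $Ric(\tilde{N'})$ is nondegenerate on an open dense set, and — using completeness and total geodesy of $\tilde N$ together with the fact that the base with $c_1<0$ carries no further flat factor (seen by re‑applying Theorem \ref{thm1} to $N'$) — the horizontal component of $T\tilde N$ must vanish identically. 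Thus $T\tilde N\subseteq\tilde{\mathcal V}$ and, being connected, $\tilde N$ lies in one leaf $L\cong\mathbb C^{\,n-r}$; inside the flat space $L$ it is a complete flat totally geodesic complex submanifold, hence a complex–affine subspace, so after a translation $\tilde N=\mathbb C^k\times\{0\}$ in a splitting $L=\mathbb C^k\times\mathbb C^{\,n-r-k}$. Therefore
\[
\tilde M \;=\; \mathbb C^{\,n-r}\times\tilde{N'}\;=\;\mathbb C^k\times\bigl(\mathbb C^{\,n-r-k}\times\tilde{N'}\bigr),
\]
and $T^{\,n-k}:=\mathbb C^{\,n-r-k}\times\tilde{N'}$ is a complete K\"ahler manifold of complex dimension $(n-r-k)+r=n-k$, giving the asserted splitting. (One could instead argue more directly via the nullity foliation of $\tilde M$, but passing through Theorem \ref{thm1} seems cleanest.)

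The main obstacle is precisely the verticality of $\tilde N$: the hypothesis only yields $T\tilde N\subseteq\ker Ric$, and $\ker Ric$ can be strictly larger than $\tilde{\mathcal V}$ over the (a priori nonempty, closed) locus where $Ric(\tilde{N'})$ degenerates. Propagating ``$\tilde N$ is vertical'' from the generic locus to all of $\tilde N$ is where completeness of $N$, its being totally geodesic, and the $c_1(N')<0$ property are genuinely used; the remaining steps are routine bookkeeping with finite covers, universal covers, and immersed submanifolds.
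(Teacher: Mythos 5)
Your opening observation — that $Ric(X,\bar X)=0$ together with nonpositive bisectional curvature forces $T^{1,0}N$ into the full nullity of the curvature of $(M,g)$, by polarizing a nonpositive Hermitian form with vanishing diagonal — is correct and is a nice pointwise fact that does not appear in the paper. From there, however, your argument and the paper's diverge, and yours has a genuine gap exactly where you flag ``the main obstacle.''

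The paper does not argue verticality of $\tilde N$ via the nullity of the initial metric. Instead it runs the K\"ahler--Ricci flow, uses $\partial_t Ric = \sqrt{-1}\,\partial\bar\partial R$ pulled back to $N$, and integrates on $[0,\epsilon]$: since $Ric(g(0))|_{TN}=0$ and $Ric(g(\epsilon))\le 0$, one gets $\sqrt{-1}\partial\bar\partial\bigl(-\int_0^\epsilon R\,dt\bigr)\ge 0$ on $N$; as $N$ is flat and complete (hence covered by $\mathbb C^k$) and $R$ is bounded, this bounded PSH function is constant, giving $Ric(g(\epsilon))|_{TN}=0$. Crucially, the null space of $Ric(g(\epsilon))$ is \emph{parallel} and spans the Euclidean de~Rham factor, so $TN$ sits in that factor and the $\mathbb C^k$ splits off at once. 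In your version you only know $T\tilde N\subseteq \mathbb C^{\,n-r}\oplus\ker Ric(\tilde{N'})$ with respect to $g(0)$, and you cannot rule out that $\tilde N$ lies entirely over the degenerate locus of $Ric(\tilde{N'})$ — that locus is closed with empty interior but not an analytic set, and a $k$-dimensional complete totally geodesic submanifold can perfectly well live inside such a set. Your appeals to ``re-applying Theorem \ref{thm1} to $N'$'' and to $c_1(N')<0$ only show that $N'$ has no flat torus \emph{factor} (equivalently no global parallel flat subbundle), not that $\ker Ric(\tilde{N'})$ is pointwise trivial, so the propagation step is not closed. The cleanest repair is to switch to the time-$\epsilon$ metric, for which $Ric|_Y$ is strictly negative everywhere and the degenerate locus is empty — but to place $TN$ in $\ker Ric(g(\epsilon))$ you then need precisely the paper's PSH/Liouville argument, so you are led back to the paper's proof.
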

\begin{remark}
 All assumptions in theorem \ref{thm5} are ``local" around $N$, except that the holomorphic bisectional curvature on $M$ is nonpositive. Thus it might be interesting to see that local assumptions imply global splitting.
Theorem \ref{thm5} also holds if we assume the manifold has nonnegative bisectional curvature. We can also weaken the conditions by assuming that $M$ is complete with bounded curvature. Finally, note that theorem \ref{thm5} is not true for the Riemannian case. \end{remark}

In \cite{[WZ]}, Wu and Zheng studied the foliation given by the kernel of the Ricci tensor at the points where the Ricci tensor has the maximal rank. See section $2$, part $3$ for some explanations of this foliation.  For $0\leq i\leq n$, define $U(i) = \{x\in M| rank(Ric(x)) = i\}$. If $p$ is an interior point of $U(i)$, then there is a foliation near $p$ given by the kernel of the Ricci tensor. We can extend the leaves as long as the points are in the interior of $U(i)$. 
It is natural to wonder the following:
\begin{question} Will the leaf through $p$ touch the boundary of $U(i)$? \end{question}

In \cite{[F]},  Ferus showed that if $i$ is the maximal rank of the Ricci tensor, then the leaf through $p$ will stay in $U(i)$.  Thus it is complete.
We have a complete answer to the question above:
\begin{cor}\label{cor3}
The leaf through $p$ does not touch the boundary of $U(i)$ if and only if $i$ the maximal rank of the Ricci tensor on $M$.
\end{cor}

Next we discuss two applications of theorem \ref{thm1}.
The existence of canonical metrics is a central topic in K\"ahler geometry. We shall restrict to the case when $c_1\leq 0$. Yau \cite{[Y1]} solved the famous Calabi conjecture. He proved that any compact K\"ahler manifold with $c_1 < 0$ or $c_1 = 0$ admits a K\"ahler-Einstein metric.  Aubin \cite{[Au]} also obtained the proof when $c_1 < 0$.  It is natural to ask whether there exist canonical metrics on K\"ahler manifolds with nonpositive bisectional curvature. 
\begin{cor}\label{cor1}
Let $(M^n, g_0)$ be a compact K\"ahler manifold with nonpositive holomorphic bisectional curvature. Then the manifold admits a canonical metric $g$ which is locally a product of a flat metric with a K\"ahler-Einstein metric with negative scalar curvature. More precisely, $(M, g)$ is locally biholomorphic and isometric to $(D^{n-k}, g_1) \times (U^k, g_2)$. Here $k = Kod(M)$; $(D^{n-k}, g_1)$ is a flat complex Euclidean ball with small radius; $(U^k, g_2)$ is a small ball in $\mathbb{C}^k$ such that $Ric(g_2)=-g_2$.
\end{cor}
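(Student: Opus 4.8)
The plan is to build the metric on a finite cover and then descend it. By Theorem~\ref{thm1} there is a finite cover $p\colon\hat M\to M$ such that $\pi\colon\hat M\to N^{k}$ is a holomorphic and metric fibre bundle over a compact K\"ahler manifold $N$ with $c_1(N)<0$, whose fibres are flat complex tori, and with $k=\dim_{\mathbb C}N=\mathrm{Kod}(M)$ equal to the maximal rank of $\mathrm{Ric}(g)$. Since $c_1(N)<0$, the theorem of Aubin and Yau provides a \emph{unique} K\"ahler--Einstein metric $g_N$ on $N$ with $\mathrm{Ric}(g_N)=-g_N$; this will be the base part of the canonical metric.

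Next I would produce a new K\"ahler metric $\hat g'$ on $\hat M$. Let $\mathcal V=\ker d\pi\subset T\hat M$ be the vertical distribution (tangent to the torus fibres) and $\mathcal H=\mathcal V^{\perp}$ its $\hat g$-orthogonal complement; then $\mathcal V$ is $J$-invariant because the fibres are complex submanifolds and $\mathcal H$ is $J$-invariant because $J$ preserves $\hat g$. Define $\hat g'$ to be the Hermitian metric for which $\mathcal H\perp\mathcal V$, $\hat g'|_{\mathcal V}=\hat g|_{\mathcal V}$, and $\hat g'|_{\mathcal H}=\pi^{*}g_N|_{\mathcal H}$; since $\mathcal V$, $\mathcal H$, $\hat g|_{\mathcal V}$ and $\pi^{*}g_N$ are all globally defined on $\hat M$, so is $\hat g'$. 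The decisive point is the local model: by the local splitting that underlies Theorem~\ref{thm1} (as in Wu--Zheng), every point of $\hat M$ has a neighbourhood biholomorphic and isometric to a Hermitian product $T'\times U$, where $T'$ is an open subset of a flat complex torus with its flat metric $g_{T'}$, $U\subset N$ is open with the metric induced from $\hat g$, the fibres of $\pi$ correspond to the $T'$-slices, and $\pi$ is the projection to $U$. On such a chart $\hat g'=g_{T'}\oplus g_N|_{U}$, an honest Riemannian product of a flat piece with an open piece of $(N,g_N)$. Hence $\hat g'$ is a smooth K\"ahler metric (closed on each chart, so globally), and every point of $(\hat M,\hat g')$ has a neighbourhood biholomorphic and isometric to $(D^{n-k},g_1)\times(U^{k},g_2)$ with $g_1$ flat Euclidean --- a flat K\"ahler metric on a ball is standard in suitable holomorphic coordinates, so $D^{n-k}$ is a small Euclidean ball --- and $(U^{k},g_2)$ a small ball in $(N,g_N)$, so $\mathrm{Ric}(g_2)=-g_2$.

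Finally I would descend $\hat g'$ to $M$. From the construction of $N$ in Theorem~\ref{thm1}, $\mathcal V$ is the canonical foliation obtained by extending the kernel of the Ricci tensor of $g$ from the maximal-rank locus; since the Ricci tensor is intrinsic, $\mathcal V$, and hence $\mathcal H=\mathcal V^{\perp}$, is invariant under the deck group $G$ of $p$. Therefore $G$ descends to an action on the leaf space $N$ by biholomorphisms, and by uniqueness of the Aubin--Yau metric $g_N$ is $G$-invariant; since $\pi$ is $G$-equivariant and $\hat g|_{\mathcal V}$ is $G$-invariant, $\hat g'$ is $G$-invariant, hence descends to a K\"ahler metric $g'$ on $M=\hat M/G$ with exactly the asserted local product structure, the flat factor of complex dimension $n-k$ and the K\"ahler--Einstein factor of complex dimension $k=\mathrm{Kod}(M)$. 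I expect the main obstacle to be the two places where ``locally'' must be promoted to ``globally'': first, that replacing $\hat g|_{\mathcal H}$ by $\pi^{*}g_N$ still yields a metric which is \emph{locally a Hermitian product} --- this is exactly where one needs the Hermitian local splitting coming from the proof of Theorem~\ref{thm1}, not merely topological local triviality of the bundle --- and second, that $\mathcal V$ is literally the $G$-invariant Ricci-kernel foliation, which is what legitimises the descent from $\hat M$ to $M$.
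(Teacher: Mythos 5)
Your construction on $\hat M$ is sound, and the local model you obtain (a Hermitian product of a flat ball with a ball carrying $g_N$ with $\mathrm{Ric}(g_N)=-g_N$) is exactly what the corollary asserts. The gap is in the final descent step. You write $M=\hat M/G$ where $G$ is the deck group of the finite covering $p\colon\hat M\to M$, and conclude that $G$-invariance of $\hat g'$ makes it descend. But $\hat M\to M$ need not be a \emph{regular} (Galois) covering --- the finite cover produced in Theorem~\ref{thm1} comes from choosing a finite-index subgroup $\Gamma'\subseteq\Gamma=\pi_1(M)$, and $\Gamma'$ is not required to be normal. For a non-regular cover the deck group $G$ does not act transitively on fibres, $\hat M/G\neq M$, and $G$-invariance of a tensor on $\hat M$ does \emph{not} imply that it is pulled back from $M$. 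The paper explicitly flags this in the remark following the corollary: ``Since $\hat M$ is not necessarily a regular covering of $M$, there might be no deck transformation on $\hat M$.'' So as stated your descent argument does not go through.

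The paper sidesteps this by working on the universal cover $\tilde M$, which is always a regular cover with deck group $\pi_1(M)$. One writes $\tilde M\cong\mathbb C^{\,n-k}\times\tilde N$, equips $\tilde N$ with the lift of the Aubin--Yau metric $g_2$ on the compact $N$, and must then check that every deck transformation $f\in\pi_1(M)$ (not just elements of $\Gamma'$) preserves the product metric. Each $f$ respects the splitting, so it induces a biholomorphism of $\tilde N$; since $\tilde N$ is now \emph{noncompact}, one cannot invoke Aubin--Yau uniqueness directly, and instead uses Yau's Schwarz lemma to get uniqueness of the complete K\"ahler--Einstein metric on $\tilde N$, hence $f$-invariance of $g_2$. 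The flat factor is automatically preserved because $f$ is an isometry of the original product metric. This gives $\pi_1(M)$-invariance of $\mathbb C^{\,n-k}\times(\tilde N,g_2)$ and hence the desired metric on $M$. To repair your argument you would either pass to $\tilde M$ as the paper does, or first refine $\hat M$ to a regular finite cover of $M$ (e.g.\ replace $\Gamma'$ by its normal core in $\Gamma$, which is still of finite index) and re-examine whether the conclusions of Theorem~\ref{thm1} survive that replacement; the former route is cleaner, but note that it then requires the noncompact uniqueness input (Yau's Schwarz lemma) rather than only the compact Aubin--Yau theorem you cite.
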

\begin{proof}
According to theorem \ref{thm1}, there exists a flat fibration $T^{n-k}\to M'\to N$ where $M'$ is a finite cover of $M$.  The universal cover $\tilde{M}$ is biholomorphic to $\mathbb{C}^{n-k}\times \tilde{N}$ where $\tilde{N}\to N$ is the universal covering.  Since $c_1(N) < 0$, $N$ admits a unique K\"ahler-Einstein metric $g_2$.  Thus $\tilde{N}$ admits a complete K\"ahler-Einstein metric with negative scalar curvature. Any element $a\in \pi_1(M)$ induces a deck transformation $f$ on $\tilde{M}$ which descends to a biholomorphism of $\tilde{N}$. By Yau's Schwarz lemma \cite{[Y2]}, the K\"ahler-Einstein metric on $\tilde{N}$ is unique. Thus $f$ preserves the K\"ahler-Einstein metric $g_2$ on $\tilde{N}$. Therefore, the product metric on $\mathbb{C}^{n-k}\times (\tilde{N}, g_2)$ descends to a metric on $M$ which is canonical. 
\end{proof}

It is also interesting to analyze the long time behavior of the normalized K\"ahler-Ricci flow
\begin{equation}
\frac{\partial g_{i\overline{j}}}{\partial t} = -R_{i\overline{j}}-g_{i\overline{j}}
\end{equation}
on such manifolds. Cao \cite{[C]} proved that if a compact K\"ahler manifold $(M, \omega)$ has $c_1 < 0$ or $c_1 = 0$ (without normalization), then the K\"ahler-Ricci flow converges to a K\"ahler-Einstein metric.  Tsuji \cite{[Ts]} and Tian-Zhang \cite{[TZ]} proved that if a K\"ahler manifold has $c_1\leq 0$, then the normalized K\"ahler-Ricci flow has long time existence.   In \cite{[ST1]}, Song and Tian considered the normalized
K\"ahler-Ricci flow on an elliptic surface $f: X\to\Sigma$ where some of the fibers may be singular. 
It was shown that the flow converges to a generalized K\"ahler-Einstein metric. This result was generalized in \cite{[ST2]} to the fibration $f: X \to X_{can}$ where $X$ is a nonsingular
algebraic variety with semi-ample canonical bundle and $X_{can}$ is its canonical model. We have a result in the similar spirit.

\begin{cor}
Let $M^n$ be a compact K\"ahler manifold with nonpositive bisectional curvature. Then for any initial K\"ahler metric  $g(0)$, the normalized K\"ahler-Ricci flow converges in $C^{\infty}(M)$ to the K\"ahler-Einstein metric factor in corollary \ref {cor1}.
\end{cor}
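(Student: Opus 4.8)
The plan is to recognize this as an especially clean instance of the Song--Tian collapsing picture \cite{[ST2]} and to use Theorem \ref{thm1} to compute the limit. Since the holomorphic bisectional curvature is nonpositive we have $Ric(g)\le 0$, hence $c_1(M)\le 0$ and $K_M$ is nef; by Tsuji \cite{[Ts]} and Tian--Zhang \cite{[TZ]} the normalized flow $\omega(t)$ then exists for all $t\ge 0$ from any initial K\"ahler metric. As $C^\infty$-convergence of $\omega(t)$ is a local statement, it suffices to prove it after passing to the finite cover $\hat M$ of Theorem \ref{thm1}: the flow is natural under finite \'etale covers, so the flow on $\hat M$ with initial metric $\pi^*\omega(0)$ is exactly $\pi^*\omega(t)$, and $C^\infty$-convergence of $\omega(t)$ on $M$ is equivalent to that of $\pi^*\omega(t)$ on $\hat M$.

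Next I would exhibit the Song--Tian setup on $\hat M$. The flat torus fibration $\phi\colon\hat M\to N$ has $K_{\hat M/N}$ trivial on every fibre (a complex torus has trivial canonical bundle), so $K_{\hat M}=\phi^*(K_N\otimes L)$ with $L=\phi_*K_{\hat M/N}$; since Corollary \ref{cor1} makes $\hat M$ \emph{locally} a holomorphic and metric product of a flat factor with a factor modeled on $N$, the fibration is locally holomorphically trivial, so its Kodaira--Spencer maps vanish, all fibres are biholomorphic flat tori, and hence $c_1(L)=0$. Because $c_1(N)<0$ this makes $L\otimes K_N$ ample and $K_{\hat M}=\phi^*(L\otimes K_N)$ semi-ample, with Iitaka fibration $\phi$, so \cite{[ST2]} applies. (When $Kod(M)=0$, $N$ is a point, $g$ is flat, and $\omega(t)=e^{-t}\omega(0)\to 0$, the degenerate case of the statement.)

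Now I would apply \cite{[ST2]}: $\omega(t)\to\phi^*\omega_{\mathrm{can}}$, where $\omega_{\mathrm{can}}$ on $N$ satisfies $Ric(\omega_{\mathrm{can}})=-\omega_{\mathrm{can}}+\omega_{WP}$, with $\omega_{WP}$ the Weil--Petersson form of the family. The decisive point is that $\omega_{WP}\equiv 0$, which is immediate from the vanishing of the Kodaira--Spencer maps above (the fibres do not vary in moduli). Hence $\omega_{\mathrm{can}}$ is the K\"ahler--Einstein metric on $N$ with $Ric(\omega_{\mathrm{can}})=-\omega_{\mathrm{can}}$, which exists and is unique by Aubin \cite{[Au]} and Yau \cite{[Y]}, and this is precisely the factor $g_2$ of the canonical metric in Corollary \ref{cor1}. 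Finally, since $\phi$ is a holomorphic submersion with \emph{no} singular fibres and is isotrivial, the collapsing estimates of \cite{[ST2]} (and their refinements for smooth isotrivial fibrations) hold on all of $\hat M$, not just on a dense open set, so $\omega(t)\to\phi^*\omega_{\mathrm{can}}$ in $C^\infty(\hat M)$ as semipositive $(1,1)$-forms — the fibre directions collapse and the horizontal part converges to the pullback of $g_2$. Descending to $M$ gives the corollary.

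I expect the main obstacle to be the identification of the limit, i.e.\ showing that the generalized K\"ahler--Einstein metric of \cite{[ST2]} is here the honest one on $N$; this rests entirely on the vanishing of the Weil--Petersson term, which in turn is exactly what the ``locally a product'' conclusion of Theorem \ref{thm1}/Corollary \ref{cor1} supplies. A secondary technical point is promoting the convergence from $C^\infty_{\mathrm{loc}}$ on the regular locus to global $C^\infty$, which here is available because the Iitaka fibration has no singular fibres.
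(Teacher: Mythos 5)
Your route is genuinely different in framing from the paper's, and the soft part is sound: the reduction to the finite cover $\hat M$, the observation that the torus fibration $\phi\colon\hat M\to N$ is isotrivial so that the Kodaira--Spencer maps and hence the Weil--Petersson form vanish, and the consequent identification of the Song--Tian generalized K\"ahler--Einstein metric with the honest K\"ahler--Einstein metric $g_2$ on $N$ from Corollary \ref{cor1}. That is a clean way to \emph{identify} the limit. The paper instead bypasses the Weil--Petersson discussion entirely by invoking Gill's theorem (Theorem \ref{thm3}, \cite{[G]}), which already states that for $X=Y\times T$ with $c_1(Y)<0$ and $T$ a torus, the flow converges in $C^\infty$ to $\pi^*\omega_Y$, and then remarks that since $\hat M$ is only \emph{locally} biholomorphic to $T\times U$ (but with a well-defined flat fibre metric $\omega_T$ independent of the base point) Gill's proof goes through unchanged.

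The gap in your argument is precisely at the step you flag as a ``secondary technical point.'' \cite{[ST2]} does \emph{not} give $C^\infty$ convergence of $\omega(t)$; it gives convergence in a much weaker sense (at the level of currents / potentials, with local estimates away from singular fibres). Upgrading to global $C^\infty$ convergence of the metric in the collapsing setting is exactly the hard analytic content, and your phrase ``the collapsing estimates of \cite{[ST2]} (and their refinements for smooth isotrivial fibrations)'' gestures at the needed ingredient without supplying it. Those refinements are the theorems of Song--Weinkove \cite{[SW]} (for global products) and Gill \cite{[G]} (for the torus-fibration case relevant here) --- i.e.\ the very references the paper's proof rests on. As written, your proof has the right structure and the right limit, but the decisive $C^\infty$ convergence is asserted rather than proved or correctly cited; once you replace ``refinements for smooth isotrivial fibrations'' with an explicit appeal to \cite{[G]} (noting, as the paper does, that the local-product structure supplies the fibrewise flat reference metric $\omega_T$ that Gill's argument needs), the proof closes and in fact contains a bit more conceptual explanation (the Weil--Petersson vanishing) than the paper gives.
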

\begin{proof}
Let $M'$ be in theorem \ref{thm1}. We consider the normalized K\"ahler-Ricci flow on $M'$ which is diffeomorphic to $T\times N$. 
Recall a theorem of M. Gill \cite{[G]} which generalizes a theorem in \cite{[SW]} by Song and Weinkove,
\begin{theorem}\label{thm3}
Let $X = Y\times T$ where $Y$ is a compact K\"ahler manifold with negative first Chern class and $T$ is a complex torus. Let $\omega(t)$ be the normalized K\"ahler-Ricci flow on $X$ with any initial metric $\omega(0)$, then 
$\omega(t)$ converges to $\pi^*(\omega_Y)$ in $C^{\infty}(X, \omega_0)$ sense as $t\to \infty$ where $\pi: X\to Y$ is the projection and $\omega_Y$ is the K\"ahler-Einstein metric on $Y$.
\end{theorem}

Note that  $M'$ is not necessarily biholomorphic to $T\times N$.  However, $M'$ is locally biholomorphic to $T\times U$ where $U$ is an open set in $N$. Thus there is a  flat metric $\omega_T$ on the fiber independent of the projection to $N$. Then one can check that the proof of theorem \ref{thm3} in \cite{[G]} works in this case without any modification.
The projection of the K\"ahler-Ricci flow from $M'$ to $M$ concludes the proof.
\end{proof}

 The proof of theorem \ref{thm1} uses Hamilton's Ricci flow \cite{[H1]} and Hamilton's maximum principle for tensors \cite{[H2]}\cite{[CL]}\cite{[BW]}, together with some argument in \cite{[WZ]} by Wu and Zheng. We will use the invariant convex set constructed in \cite{[BW]} by B\"{o}hm and Wilking. The key point is to prove that there exists a small $\epsilon > 0$ such that along the Ricci flow, $Ric(g(t)) \leq 0$ for all $0 < t < \epsilon$ (note that the holomorphic bisectional curvature is not necessarily nonpositive for small $t$). The final assertion rank($Ric(g(0))) = k$ will follow from an argument of Yu \cite{[Yu]}.
\begin{remark}
There is a general philosophy that the Ricci flow makes the curvature towards positive, e.g., Hamilton-Ivey pinching estimate \cite{[H3]}\cite{[I]}.  So it might be interesting to see that in our case, at least in a short time, the Ricci curvature remains nonpositive.
\end{remark}

\section*{Acknowledgements}
The author would like to express his deep gratitude to his former advisor, Professor Jiaping Wang, for his kind help and useful suggestions.
He also thanks Professor Fangyang Zheng for his interest in this note.   Special thanks also go to Guoyi Xu, Bo Yang and Yuan Yuan for their helpful comments.

\section{\bf{Preliminaries}}
 \textbf{Hamilton's Maximum Principle}

Let $M^n$ be a closed oriented manifold with a smooth family of Riemannian metrics $g(t)$, $t \in [0, T]$. Let $V\rightarrow M$ be a real
vector bundle with a time dependent metric $h$ and $\Gamma(V)$ be the vector space of
smooth sections on $V$. Let $\nabla^L_t$ denote the corresponding Levi-Civita connection on $(M, g(t))$. Furthermore,
let $\nabla_t$ denote a time dependent metric connection on $V$.  For a section $R\in \Gamma(V)$, define a new section
$\Delta_t R\in \Gamma(V)$ as follows. For $p\in M$ choose an orthonormal basis of $V_p$ (the fiber of $V$ at $p$) and
extend it along the radial geodesics in $(M, g(t))$ emanating from $p$ by parallel transport of $\nabla_t$ to an orthonormal basis
$X_1(q)$, ..., $X_d(q)$ of $V_q$ for all $q$ in a small neighborhood of $p$. If $f_i$ satisfies $R = \sum\limits_{i=1}^d f_iX_i$,
then $$(\Delta_tR)(p) = \sum\limits_{i=1}^{d}(\Delta_tf_i)X_i(p)$$ where $\Delta_t$ is the Beltrami Laplacian on functions.

Suppose that a time dependent section $R(\cdot, t)\in \Gamma(V)$ satisfies the parabolic equation
\begin{equation}\label{1}
\frac{\partial R(p, t)}{\partial t} = (\Delta_tR)(p, t) + f(R(p, t))
\end{equation}
where $f: V\rightarrow V$ is a local Lipschitz map mapping each fibre $V_q$ to itself.
Roughly speaking, Hamilton's maximum principle says that the dynamics of the parabolic equation (\ref{1})
is controlled by the ordinary differential equation
\begin{equation}\label{2}
\frac{d R}{d t} = f(R(p, t)).
\end{equation}

More precisely, we have the following version of Hamilton's maximum principle \cite{[BW]}\cite{[CL]}:
\begin{theorem}\label{thm2}
For $t\in [0, \delta]$, let $C(t)\subseteq V$ be a closed subset, depending continuously on $t$. Suppose that each of the sets $C(t)$ is invariant
under parallel transport, fiberwise convex and that the family of $C(t)$ $(0\leq t\leq \delta)$ is invariant under the ordinary differential equation (\ref{2}).  Then for any solution $R(p, t)\in \Gamma(V)$ on $M\times [0, \delta]$ of parabolic equation (\ref{1}) with $R(\cdot, 0)\in C(0)$, we have $R(\cdot, t)\in C(t)$ for all $t\in [0, \delta]$.
\end{theorem}

\textbf{The Ricci flow deformation}
Let $M^n$ be a compact K\"ahler manifold. We consider an abstract complex vector bundle $W$ isomorphic to $T_{\mathbb{C}}M$ and endow it with a fixed Hermitian fiber metric $k$. Let $g_t$ denote the solution to the unnormalized Ricci flow
$$\frac{\partial}{\partial t}g_{i
\overline{j}} = -Ric_{i\overline{j}}$$
with initial metric $g_0$. We choose an isometry $u: W\to T_\mathbb{C}M$ at $t=0$ and let the isometry evolve by the equation $$\frac{\partial}{\partial t}u^i_a=\frac{1}{2}g^{i\overline{j}}Ric_{k\overline{j}}u^k_a.$$ Then, the pull-back metric on $W$ is constant in time \cite{[H2]}. With the isometry $u(t): (W, k) \to (T_{\mathbb{C}}M, g_t)$, we can pull back any complex vector bundle 
over $M$ associated to the principle bundle $P$ of unitary frames of $(T_\mathbb{C}M, g_t)$. 

Let $H$ be an $2n$ dimensional real vector space with complex structure $J$ and $J$-invariant inner product $g(\cdot, \cdot)$. Define $H_\mathbb{C} = H\otimes_{\mathbb{R}}\mathbb{C}$. Then we extend $J$ and $g$ linearly to $H_\mathbb{C}$. Define $U = \{X-\sqrt{-1}JX, X\in H\}$, $\overline{U} = \{X+\sqrt{-1}JX, X\in H\}$. Then $U\oplus\overline{U} = H_\mathbb{C}$. Let $K$ be a subspace of $\otimes^4 H^*$ such that for any $R\in K$,
\begin{itemize}
\item $R(x, y, z, w) = -R(y, x, z, w) = R(z, w, x, y)$ for $x, y, z, w\in H$;
\item $R(x, y, z, w)+R(y, z, x, w)+R(z, x, y, w) = 0$ for $x, y, z, w\in H$;
\item $R(X, Y, z, w) = 0$ for $X, Y\in U$, $z, w\in H$ (here we extend $R$ complex linearly). 
\end{itemize}

The curvature tensor $R$ of the K\"ahler manifold $(M, g)$ can be considered as a section in the associated bundle
\begin{equation}\label{-11}V = P_W\times_{U(n)}K.\end{equation} Here $P_W$ is the unitary frame bundle.
The fiber metric $k$ on $W$ induces a metric on $V$ again denoted by $k$.
We can pull back the Levi-Civita connection of the tangent bundle of $(M, g(t))$ and induce connection on any associated vector bundle. As explained in the previous section, the Laplacian of a section of such a bundle could be defined.
One arrives at the following evolution equation for the curvature tensor $R$ on $M$ (\cite{[H2]}):
\begin{equation}\label{eq11}
\begin{aligned}
\frac{\partial}{\partial t}R_{i\overline jk\overline l}&=\Delta R_{i\overline jk\overline l}+R_{i\overline jp\overline q}R_{q\overline pk\overline l}-R_{i\overline pk\overline q}R_{p\overline jq\overline l}+R_{i\overline lp\overline q}R_{q\overline pk\overline j}\\&-\frac{1}{2}(R_{i\overline p}R_{p\overline jk\overline l}+R_{p\overline j}R_{i\overline pk\overline l}+R_{k\overline p}R_{i\overline jp\overline l}+R_{p\overline l}R_{i\overline jk\overline p}),
\end{aligned}
\end{equation}
\begin{equation}\label{eq-50}
\frac{\partial}{\partial t}R_{i\overline j} = \Delta R_{i\overline j}+R_{i\overline jp\overline q}R_{q\overline p}-R_{i\overline p}R_{p\overline j}.
\end{equation} 
The corresponding ordinary equations are
\begin{equation}\label{eq10}
\begin{aligned}
\frac{d}{d t}R_{i\overline jk\overline l}&=R_{i\overline jp\overline q}R_{q\overline pk\overline l}-R_{i\overline pk\overline q}R_{p\overline jq\overline l}+R_{i\overline lp\overline q}R_{q\overline pk\overline j}\\&-\frac{1}{2}(R_{i\overline p}R_{p\overline jk\overline l}+R_{p\overline j}R_{i\overline pk\overline l}+R_{k\overline p}R_{i\overline jp\overline l}+R_{p\overline l}R_{i\overline jk\overline p}),
\end{aligned}
\end{equation}\begin{equation}\label{eq15}
\frac{d}{dt}R_{i\overline j} = R_{i\overline jp\overline q}R_{q\overline p}-R_{i\overline p}R_{p\overline j}.
\end{equation}

\medskip

\textbf{Ricci kernel foliation} This part is based on \cite{[WZ]}, page 266.
Let $M$ be a compact K\"ahler manifold with nonpositive bisectional curvature. Then the Ricci form is negative semi-definite. Let $r$ be the maximal (complex) rank of the Ricci form and $U$ be the open set on $M$ where the rank of Ric is equal to $r$. Denote by $\mathcal{L}$ the distribution in $U$ given by the kernel of the Ricci tensor.

By linear algebra, the nonpositivity of the bisectional curvature implies that $X\in\mathcal{L}$ if and only if $R(X, *, *, *) \equiv 0$, where $R$ is the curvature tensor. So $\mathcal{L}$ is the kernel of the curvature tensor. Thus it is a foliation, whose leaves are totally geodesic, flat complex submanifolds of $U$. By a theorem of Ferus \cite{[F]},  each leaf of $\mathcal{L}$ is complete.

\section{\bf{The proof of theorem \ref{thm1}}}
\begin{proof}

Let $g(t)$ be the solution to the Ricci flow equation $\frac{\partial g_{i\overline j}}{\partial t} = -Ric_{i\overline j}$ with $g(0) = g$. Let $V$ be defined as in (\ref{-11}).  Following B\"ohm and Wilking  \cite{[BW]}, we shall construct a family of convex sets $V_t$ of $V$ which are invariant under parallel transport and $V_t$ is invariant under the evolution equation (\ref{eq10}) for small $t$. The precise statement is the following:
\begin{prop}\label{prop}
Let $K_2$ be a positive constant. Then there exist positive constants $K_1, K_3, \epsilon$ depending only on $K_2$ and $n$  such that $\epsilon K_1\leq 1$, $\epsilon K_3\leq \frac{1}{2}$ and the following hold:
For $t\geq 0$, let $V_t$ be a subset of K\"ahler curvature operators $R$ in $V$ satisfying the following conditions:

 (1). $Ric(\alpha, \overline{\alpha}) \leq 0$ for any $e_\alpha \in W^{1, 0}$.

 (2). $|R_{x\overline xu\overline v}|^2 \leq (1+tK_1)Ric(u, \overline u)Ric(v, \overline v)$ for any $x , u , v \in W^{1, 0}$ and $|x|= 1$.

 (3).$||R|| \leq K_2 + tK_3$($||\cdot||$ is the norm with respect to the bundle metric).
 
 Then $V_t$ is closed, fiberwise convex, invariant under parallel transport for each $t$. Moreover, $V_t$ is invariant under the evolution equation (\ref{eq10}) for $0 \leq t < \epsilon$. 
\end{prop}
\begin{proof}
Through the course of the proof of proposition \ref{prop}, we will assume $\epsilon K_1\leq 1$. The explicit values of $\epsilon$ and $K_1$ will be determined by the end of the proof of the proposition.
It is clear that $V_t$ is closed and invariant under parallel transport. Given any positive constants $K_2, K_1, K_3, \epsilon$, we prove $V_t$ is fiberwise convex for each $t$.
It is easy to see that (3) defines a convex set.
Suppose $R$ and $S$ in $V$ satisfy (1) and (2). For any $0 \leq \lambda \leq 1$, 
define $$T = \lambda R + (1-\lambda)S.$$  Obviously $T$ satisfies (1). For any $x , u , v \in W^{1, 0}$ and $|x|= 1$,
\begin{equation}
\begin{aligned}
|T_{x\overline xu\overline v}|^2 &= |\lambda R_{x\overline xu\overline v} + (1-\lambda)S_{x\overline xu\overline v}|^2 \\&\leq (1 + tK_1)| \lambda \sqrt{Ric_R(u, \overline u)Ric_R(v, \overline v)} +  (1-\lambda)\sqrt{Ric_S(u, \overline u)Ric_S(v, \overline v)}|^2 \\&\leq (1 + tK_1)(\lambda Ric_R(u, \overline u) + (1-\lambda) Ric_S(u, \overline u))(\lambda Ric_R(v, \overline v) + (1-\lambda) Ric_S(v, \overline v))\\&=(1+tK_1)Ric_T(u, \overline{u})Ric_T(v, \overline{v}).
\end{aligned}
\end{equation}
Therefore, $V_t$ is fiberwise convex.  

\bigskip

Next we show that $V_t$ is invariant under (\ref{eq10}) for $0\leq t\leq \epsilon$. For any $a\in [0, \epsilon)$,
let $R^{\lambda}$ be the solution to (\ref{eq10}) with the initial condition $R^\lambda(a) = R(a) - \lambda R'$.
 Here $R(a)\in V_a$; $\lambda$ is a small positive number; $R'_{i\overline jk\overline l}= c(n)(\delta_{ij}\delta_{kl}+\delta_{il}\delta_{jk})$, where $c(n)$ is a constant such that $||R'|| = 1$. 
 
 \begin{lemma} There exist positive constants $\epsilon, A, K_1, K_3$ depending only on $K_2$ and $n$ (independent of $\lambda$) such that $\epsilon K_1\leq 1$, $\epsilon K_3\leq \frac{1}{2}$ and for any $t \in [a, \epsilon]$,
the solution $R^{\lambda}$ satisfies

(1'). $Ric^{\lambda}(\alpha, \overline{\alpha}) \leq -\frac{\lambda}{2n}e^{-At}$ for any $e_\alpha \in W^{1, 0}$ and $|e_\alpha| = 1$.

(2'). $|R^{\lambda}_{x\overline xu\overline v}|^2 \leq (1+tK_1)Ric^{\lambda}(u, \overline u)Ric^{\lambda}(v, \overline v)$ for any $x , u , v \in W^{1, 0}$ and $|x| = 1$.

(3'). $||R^{\lambda}|| \leq K_2 +\lambda+ tK_3$.
\end{lemma}

\begin{proof}
We may assume $\lambda\leq \frac{1}{2}$.  Since $R(a)\in V_a$ and $\epsilon K_3\leq \frac{1}{2}$ (this will be verified very soon), $$||R(a)||\leq K_2+aK_3\leq K_2+\frac{1}{2}.$$ $$||R^\lambda(a)||\leq ||R(a)||+\lambda||R'||\leq K_2+aK_3+\lambda\leq K_2+1.$$ By (\ref{eq10}), $\frac{d ||R^{\lambda}||}{dt}\leq C(n)||R^{\lambda}||^2$ where $C(n)$ is a constant depending only on $n=dim_{\mathbb{C}}(M)$. We find that if $t \leq \frac{1}{2C(n)(K_2+1)}$, 

\begin{equation}\label{eq13}||R^\lambda(t)||\leq 3(K_2+1), \frac{d ||R^{\lambda}||}{dt}\leq C(n)||R^{\lambda}||^2\leq 9C(n)(K_2+1)^2.\end{equation}

  Take \begin{equation}\label{eq12}K_3=10C(n)(K_2+1)^2, \epsilon\leq \frac{1}{20C(n)(K_2+1)^2}.\end{equation} Then (\ref{eq13}) is valid; $R^\lambda(t)$ satisfies (3') for $t\leq \epsilon$; $\epsilon K_3\leq \frac{1}{2}$. The explicit value of $\epsilon$ will be determined later.

\begin{claim}\label{cl1}
If $R^{\lambda}$ satisfies the Lemma at some time $t\leq  \epsilon$, then there exists $C > 0$ depending only on $K_2$ and $n$ such that at time $t$, $|R^{\lambda}_{i\overline{j}k\overline{l}}| \leq C\sqrt{-Ric^{\lambda}(i, \overline{i})}$ and $|R^{\lambda}_{i\overline{j}k\overline{l}}| \leq C\sqrt{Ric^{\lambda}(i, \overline{i})Ric^{\lambda}(j, \overline{j})}$ for any $e_i, e_j, e_k, e_j \in W^{1, 0}$ with length $1$.
\end{claim}
\begin{proof}
(1') implies $Ric^{\lambda}\leq 0$; Since $\epsilon K_1\leq 1$, (2') says $|R^{\lambda}_{x\overline xu\overline v}|^2 \leq 2Ric^{\lambda}(u, \overline u)Ric^{\lambda}(v, \overline v)$ for any $x , u , v \in W^{1, 0}$ and $|x| = 1$.

We polarize the curvature operator. 
\begin{equation}\label{eq14}
\begin{aligned}
R^\lambda_{i\overline jk\overline l} &= \frac{1}{4}(R^\lambda(e_i, \overline{e_j}, e_k+e_l, \overline{e_k}+\overline{e_l}) - R^\lambda(e_i, \overline{e_j}, e_k-e_l, \overline{e_k}-\overline{e_l})\\&+\sqrt{-1}R^\lambda(e_i, \overline{e_j}, e_k+\sqrt{-1}e_l, \overline{e_k}-\sqrt{-1}\overline{e_l})\\&-\sqrt{-1}R^\lambda(e_i, \overline{e_j}, e_k-\sqrt{-1}e_l, \overline{e_k}+\sqrt{-1}\overline{e_l})).
\end{aligned}
\end{equation}

Each term is bounded by the Ricci curvature. For instance, 
\begin{equation}
\begin{aligned}
|R^\lambda(e_i, \overline{e_j}, e_k+e_l, \overline{e_k}+\overline{e_l}) |&\leq 2\sqrt{Ric^\lambda(i, \overline i)Ric^\lambda(j, \overline j)}|e_k+e_l|^2\\& \leq 8\sqrt{Ric^\lambda(i, \overline i)Ric^\lambda(j, \overline j)}\\&\leq C'\sqrt{(K_2+1)}\sqrt{-Ric^\lambda(i, \overline i)}.
\end{aligned}
\end{equation}
In the last inequality, we used (\ref{eq13}). Here $C'$ is a constant depending only on $n$.
Similarly, other three terms in the right hand side of (\ref{eq14}) could be bounded. The proof of Claim \ref{cl1} is complete.
\end{proof}

Below $C_s(s= 1, 2,...)$ are positive constants depending only on $n$ and $K_2$.
It is easy to see that (1'), (2') and (3') in the Lemma hold for $t = a$.
If the Lemma is not true, let $t_0 = \sup\{b | $the Lemma holds for $a\leq t\leq b\}<\epsilon$.  
Therefore, Claim \ref{cl1} holds at $t=t_0$.
There are only two possibilities:

(i) (1') does not hold on $[a, t_1)$ for any $t_1 > t_0$.

\medskip

(ii) (2') does not hold on $[a, t_1)$ for any $t_1 > t_0$.

\medskip

In case (i), Let $e_i (i = 1, 2,..., n)$ be a unitary frame in $W^{1, 0}$. For any $e_\alpha \in W^{1, 0}$ with $|e_\alpha| = 1$, (\ref{eq15}), (\ref{eq13}) and Claim \ref{cl1}  imply
\begin{equation}\label{eq5}
\begin{aligned}
\frac{d}{dt}Ric^\lambda(\alpha, \overline\alpha)|_{t=t_0}&=\sum\limits_{i, j}R^\lambda_{\alpha\overline\alpha i\overline j}R^\lambda_{j\overline i}-R^\lambda_{\alpha\overline i}R^\lambda_{i\overline\alpha}\\& \leq C_3\sqrt{Ric^\lambda(\alpha, \overline\alpha)Ric^\lambda(\alpha, \overline\alpha)}\cdot C_6+C_4\sqrt{-Ric^\lambda(\alpha, \overline\alpha)}\cdot C_5\sqrt{-Ric^\lambda(\alpha, \overline\alpha)}
\\&= -C_1Ric^\lambda(\alpha, \overline\alpha).
\end{aligned}
\end{equation}  By (1') and our assumption, at $t=t_0$, $$Ric^\lambda(\alpha, \overline\alpha)\leq -\frac{\lambda}{2n}e^{-At_0}.$$ Take
\begin{equation}\label{eq16}A = 2C_1.\end{equation} (\ref{eq5}) implies that $Ric^\lambda(\alpha, \overline\alpha)\leq -\frac{\lambda}{2n}e^{-At}$ for $0\leq t\leq t_0+\delta$ with some $\delta>0$. This contradicts (i).
 
\medskip

For case (ii), let $x\in W^{1, 0}$ with $|x| = 1$. Note that
\begin{equation}\label{eq19}
t_0K_1< \epsilon K_1 \leq 1.\end{equation}
By a computation similar to (\ref{eq5}),
\begin{equation}\label{eq18}
\begin{aligned}
&\frac{d}{dt} ((1+tK_1)Ric^\lambda(u, \overline u)Ric^\lambda(v, \overline v) - |R^\lambda_{x\overline xu\overline v}|^2)|_{t=t_0}\\&=K_1Ric^\lambda(u, \overline u)Ric^\lambda(v, \overline v)-(1+t_0K_1)\frac{d}{dt}(Ric^\lambda(u, \overline u)Ric^\lambda(v, \overline v))-\frac{d}{dt}|R^\lambda_{x\overline xu\overline v}|^2\\&\geq K_1Ric^\lambda(u, \overline u)Ric^\lambda(v, \overline v)-2(|\frac{d}{dt}Ric^\lambda(u, \overline u)||Ric^\lambda(v, \overline v)|+|\frac{d}{dt}Ric^\lambda(v, \overline v)||Ric^\lambda(u, \overline u)|)\\&-2||R^\lambda_{x\overline xu\overline v}||\frac{d}{dt}R^\lambda_{x\overline xu\overline v}|\\&\geq
(K_1-C_2)Ric^\lambda(u, \overline u)Ric^\lambda(v, \overline v).
\end{aligned}
\end{equation}
In the inequality, Claim \ref{cl1}, (\ref{eq10}), (\ref{eq13}) and (\ref{eq19}) are applied.
 By (2') and our assumption, $$(1+tK_1)Ric(u, \overline u)Ric(v, \overline v) - |R_{x\overline xu\overline v}|^2\geq 0$$ at $t=t_0$. 
Take \begin{equation}\label{eq17}K_1 = 2C_2+10,  \epsilon =min(\frac{1}{2(2C_2+10)},  \frac{1}{20C(n)(K_2+1)^2}).\end{equation}
Then (\ref{eq12}) and (\ref{eq19}) are valid. Therefore (\ref{eq18}) holds. Moreover, if $u, v\neq 0$, $(\ref{eq18})> 0$ by (1'). This means (ii) cannot happen for $t_0< \epsilon$. Putting (\ref{eq12}), (\ref{eq16}), (\ref{eq17}) and (\ref{eq19}) together, 
we prove the lemma.
\end{proof}
Proposition \ref{prop} follows if we let $\lambda \to 0$ in the Lemma.
\end{proof}

\bigskip

Take $K_2 = 2||R||$ in proposition \ref{prop}, where $R$ is the curvature tensor of $(M^n, g)$ and $||\cdot||$ is the $C^0$ norm with respect to $g_0=g$.
\begin{claim}\label{cl0}$R\in V_0$ where $V_0$ is defined in proposition \ref{prop}.
\end{claim}
\begin{proof}
  (1) and (3) are automatic, since $(M, g)$ has nonpositive bisectional curvature.
To check (2), we notice that for fixed $x$, $R_{x\overline{x}p\overline{q}}$ is a Hermitian form. Let $e_i$ be the eigenvectors for $i = 1, 2,.., n$ and $$R_{x\overline{x}e_i\overline{e_j}} = \delta_{ij}\lambda_i,$$ where $\lambda_i$ are all nonpositive. Suppose $u = \sum\limits_{i=1}^{n}u_ie_i, v = \sum\limits_{i=1}^{n}v_ie_i$, then \begin{equation}
\begin{aligned}
|R_{x\overline{x}u\overline{v}}|^2 &= |\sum\limits_{i=1}^{n}u_i\overline{v_i}\lambda_i|^2 \\&\leq (\sum\limits_{i=1}^{n}|u_i\sqrt{-\lambda_i}|^2)(\sum\limits_{i=1}^{n}|\overline{v_i}\sqrt{-\lambda_i}|^2) \\&= R_{x\overline{x}u\overline{u}}R_{x\overline{x}v\overline{v}} \\&\leq Ric(u, \overline{u})Ric(v, \overline{v}).
\end{aligned}
\end{equation}
\end{proof}

\bigskip

Putting Claim \ref{cl0}, proposition \ref{prop} and theorem \ref{thm2} together, we find 
\begin{theorem}\label{thm-2}
 Let $(M, g)$ be compact K\"ahler manifold with nonpositive bisectional curvature. If $g_t$ satisfies the Ricci flow equation $\frac{\partial g_t}{\partial t} = -Ric(g_t)$ and $g_0 = g$, then there exists $\epsilon>0$ depending only on the bound of the curvature and the dimension such that $Ric(g_t)\leq 0$ for $0\leq t< \epsilon$.
\end{theorem}
\begin{remark}
The counterpart of theorem \ref{thm-2} is true in the Riemannian case, i.e., if a compact manifold has nonpositive sectional curvature, then along the Ricci flow, in a short time, the Ricci curvature remains nonpositive.
\end{remark}

Now let us come back to theorem \ref{thm1}. By theorem \ref{thm-2}, $Ric(g(t)) \leq 0$ for small $t>0$. Following the arguments in \cite{[BW]} (page 676-677), we shall show that the rank of $Ric_t$ is constant and the null space is parallel. 
By (\ref{eq-50}),
$$\frac{\partial}{\partial t}Ric_{v\overline v} = \Delta Ric_{v\overline v}+\sum\limits_{p, q}R_{v\overline vp\overline q}Ric_{q\overline p}-\sum\limits_{p}Ric_{v\overline p}Ric_{p\overline v}.$$
Let $H$ be a constant and define $\tilde{Ric}_t = e^{Ht} Ric_t$. By proposition \ref{prop} and Claim \ref{cl1}, if $H$ is large (depending on $n$ and $K_2$), 
\begin{equation}\label{4}
\begin{aligned}
\frac{\partial\tilde{Ric}_{v\overline v}}{\partial t} &=He^{Ht}Ric_{v\overline v}+e^{Ht}\frac{\partial Ric_{v\overline v}}{\partial t}\\&
=e^{Ht}(HRic_{v\overline v}+\sum\limits_{p, q}R_{v\overline vp\overline q}Ric_{q\overline p}-\sum\limits_{p}Ric_{v\overline p}Ric_{p\overline v})+\Delta_t\tilde{Ric}_{v\overline v}\\&\leq \Delta_t \tilde{Ric}_{v\overline v}.
\end{aligned}
\end{equation}
 If $Ric < 0$ for some small $t > 0$, then $c_1(M) < 0$. Otherwise, the rank of the Ricci tensor is less
than $n$ for all  $\epsilon>t > 0$. 
Let $0\geq \mu_1\geq \mu_2\geq....\geq\mu_n$ denote the eigenvalues of $\tilde{Ric}$ and let
$$\sigma_l=\mu_1+\mu_2+....+\mu_l.$$ Fix $p\in M$ and let $e_1(t_0), e_2(t_0),..., e_l(t_0)$ be an orthogonal basis of $T^{1, 0}_p(M)$ such that
$\sigma_l(t_0) = \sum\limits_{i=1}^{l}\tilde{Ric}_{t_0}(e_i(t_0), \overline{e_i(t_0)})$.
\begin{equation}
\begin{aligned}
\sigma'_l(t_0): &= \lim\limits_{t\nearrow t_0}\sup \frac{\sigma_l(t_0)-\sigma_l(t)}{t_0-t}\\&
\leq\frac{d}{dt}|_{t=t_0}\sum\limits_{i=1}^l\tilde{Ric}_t(e_i(t_0), \overline{e_i(t_0)})\\&\leq \sum\limits_{i=1}^l\Delta\tilde{Ric}_{t_0}(e_i(t_0), \overline{e_i(t_0)})
\\&\leq\Delta\sigma_l\end{aligned}
\end{equation}
Thus $$\frac{\partial \sigma_l}{\partial t} \leq \Delta\sigma_l$$
in the support function sense. By the strong maximum principle, for some $\epsilon_1 > 0$, either $\sigma_l < 0$ or $\sigma_l \equiv 0$ for $t\in(0, \epsilon_1]$. Therefore we can assume that the rank of $Ric_t$ is a constant $k$ for $0<t\leq\epsilon_1$.

For any point $p\in M$, let $U$ be a small neighborhood containing $p$. 
Consider a smooth vector field $v(t) \in T^{1, 0}U$ for $0<t<\epsilon_1$ depending smoothly on $t$ such that $\tilde{Ric}_t(v, \overline v) = 0$.
Since $\tilde{Ric} \leq 0$,  

\begin{equation}\label{eq21}\tilde{Ric}(v, \overline s)=\tilde{Ric}(s, \overline v) = 0\end{equation} for any $s\in T^{1, 0}M$. Let $e_i\in T^{1, 0}M$ be a local unitary frame on $M$ and $s$ be a smooth section of $T^{1,0}M$.
Then $0 = e_i(\tilde{Ric}(v, \overline s)) = (\nabla_{e_i}\tilde{Ric})(v, \overline s)+\tilde{Ric}(\nabla_{e_i}v, \overline s)$. This means
\begin{equation}\label{eq22}(\nabla_{e_i}\tilde{Ric})(v, \overline s)=-\tilde{Ric}(\nabla_{e_i}v, \overline s);
(\nabla_{\overline{e_i}}\tilde{Ric})(s, \overline v)=-\tilde{Ric}(s, \nabla_{\overline{e_i}}\overline v).
\end{equation}

By (\ref{4}), (\ref{eq21}) and (\ref{eq22}), $$\begin{aligned}0 &= \frac{\partial}{\partial t}(\tilde{Ric}(v, \overline v))\\&=(\frac{\partial}{\partial t}\tilde{Ric})(v, \overline v)+ \tilde{Ric}(\frac{dv}{dt}, \overline v) + \tilde{Ric}(v, \frac{d\overline v}{dt})  \\&= (\frac{\partial}{\partial t}\tilde{Ric})(v, \overline v)\\&\leq(\Delta \tilde{Ric})(v, \overline v)\\& = (\sum\limits_i(\nabla_{e_i}\nabla_{\overline{e_i}}+\nabla_{\overline{e_i}}\nabla_{e_i})\tilde{Ric})(v, \overline v)\\&=\Delta(\tilde{Ric}(v, \overline v)) -\sum\limits_i(2\tilde{Ric}(\nabla_{e_i}v, \overline{\nabla_{e_i}v})+2(\nabla_{e_i}\tilde{Ric})(v, \nabla_{\overline i}\overline v)+2(\nabla_{\overline{e_i}}\tilde{Ric})(\nabla_{e_i}v, \overline v))\\&
=2\tilde{Ric}(\nabla_{e_i}v, \overline{\nabla_{e_i}v})\end{aligned}$$ 

This shows that the kernel
of $Ric_t$ is parallel for $0<t<\epsilon_1$.  Consider
\begin{equation}\label{eq99}
0 = \frac{\partial}{\partial t}(Ric(v, \frac{d\overline v}{dt})) = (\frac{\partial}{\partial t}Ric)(v, \frac{d\overline v}{dt})+Ric(\frac{dv}{dt}, \frac{d\overline v}{dt}).
\end{equation}
Let $s\in T^{1, 0}M$. Then
 \begin{equation}\label{eq98}
 \begin{aligned}
 (\frac{\partial}{\partial t}Ric)(v, \overline s)& =  \Delta Ric_{v\overline s}+\sum\limits_{p, q}R_{v\overline sp\overline q}Ric_{q\overline p}-\sum\limits_{p}Ric_{v\overline p}Ric_{p\overline s}\\&= \Delta Ric_{v\overline s}\\&=0.
\end{aligned}
\end{equation}
In the second equality, we used (2) in proposition \ref{prop}. In the last step, we used that the kernel of $Ric_t$ is parallel.
Take $s = \frac{dv}{dt}$. Then (\ref{eq99}) and (\ref{eq98}) imply $Ric_t(\frac{dv}{dt}, \frac{d\overline v}{dt}) = 0$. This means the kernel of $Ric_t$ is invariant for $0<t<\epsilon_1$.

By proposition \ref{prop},  the kernel of the Ricci tensor is the kernel of the curvature operator. De Rham theorem says the universal cover ($\tilde M, g(t)$) has a flat factor $\mathbb{C}^{n-k}$ for $0<t<\epsilon_1$. By a holonomy argument, the universal cover $(\tilde{M}, g_0)$ is biholomorphic and isometric to $\mathbb{C}^{n-k} \times Y^{k}$ where $Y^k$ is a complete K\"ahler manifold with nonpositive bisectional curvature. 

\medskip
Let $r$ be the maximal rank of the Ricci curvature of $g=g_0$. We follow the argument in \cite{[Yu]} to show that $r=k=dim(Y)$. It is clear that \begin{equation}\label{eq-3}r\leq k,\end{equation} since the rank of $Ric_t$ is $k$ for $0<t\leq\epsilon_1$.
Recall corollary  C in \cite{[WZ]} (page 277):
\begin{theorem}\label{thm4}
If $M^n$ is a compact K\"ahler manifold with nonpositive
bisectional curvature which has Ricci rank $r < n$, then the open set U
in which the Ricci tensor has maximum rank $r$ in the universal cover
 $\tilde{M}$ is, locally, holomorphically isometric to $L_a\times Y_a$, where $L_a$
is a complete flat K\"ahler manifold, and $Y_a$ is a K\"ahler manifold with
nonpositive bisectional curvature and negative Ricci curvature.
\end{theorem}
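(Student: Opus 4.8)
The plan is to follow the proof of Corollary C in \cite{[WZ]}. Write $\mathcal{N}=\ker(Ric)\subset T^{1,0}\tilde M$; on $U$ the rank of $Ric$ equals the constant $r$, so $\mathcal{N}$ is a smooth subbundle of rank $n-r$. First I would record an algebraic consequence of nonpositive bisectional curvature alone: if $v\in\mathcal{N}_a$ then $0=Ric(v,\bar v)=\sum_i R(e_i,\bar e_i,v,\bar v)$ is a sum of nonpositive terms, so $R(x,\bar x,v,\bar v)=0$ for every $x$; combined with the Cauchy--Schwarz inequality $|R_{x\bar x v\bar w}|^2\le Ric(v,\bar v)\,Ric(w,\bar w)$ established in the proof of Proposition \ref{prop}, this yields $R(x,\bar x,v,\bar w)=0$ for all $x,w$, and after polarizing in $x$ and invoking the K\"ahler symmetries (the curvature is of type $(1,1)$) one gets $R(X,Y)v=0$ for all $X,Y$. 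Equivalently, on $U$ the real distribution $\mathcal{N}\oplus\overline{\mathcal{N}}$ is exactly the relative nullity distribution of $\tilde M$ and has there locally constant (indeed minimal) index, so by the classical theory it is a smooth foliation by complete totally geodesic leaves, which are flat since $R$ vanishes on them.

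The crux is to upgrade this to: $\mathcal{N}$ is parallel for the Levi-Civita ($=$ Chern) connection, i.e. $\nabla_X(\Gamma(\mathcal{N}))\subseteq\Gamma(\mathcal{N})$ for every $X$. I would obtain this by differentiating the identities above. For a local section $v$ of $\mathcal{N}$ one has $Ric(v,\bar w)\equiv 0$ for all $w$; Cauchy--Schwarz for the negative semidefinite form $Ric$ makes the terms $Ric(\nabla_X v,\bar v)$ vanish automatically, so a first differentiation gives $(\nabla_X Ric)(v,\bar v)=0$; differentiating a second time and feeding in the contracted second Bianchi identity $\nabla_{\bar j}Ric_{i\bar k}=\nabla_{\bar k}Ric_{i\bar j}$ together with the vanishing $R(\cdot,\cdot)v=0$ from the first step, one expects all curvature-correction terms to cancel and to be left with $Ric(\nabla_X v,\overline{\nabla_X v})=0$, i.e. $\nabla_X v\in\Gamma(\mathcal{N})$. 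This is precisely the careful study of the null foliation carried out in \cite{[WZ]}, and it is the step I expect to be the main obstacle: a general relative nullity foliation need not produce a local splitting (e.g. the rulings of a cone), so the K\"ahler hypothesis and the two sign conditions must be combined in an essential way, and the second-order computation has to be organized so that the curvature terms really do cancel.

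Once $\mathcal{N}$ is known to be parallel, the subbundles $\overline{\mathcal{N}}$, $\mathcal{N}^\perp$, $\overline{\mathcal{N}^\perp}$ are parallel and $J$-invariant as well, so the local de Rham decomposition theorem in the K\"ahler category gives, for each point $a\in U$, a neighborhood holomorphically isometric to a product $L_a\times Y_a$ with $TL_a=\mathcal{N}\oplus\overline{\mathcal{N}}$ and $TY_a=\mathcal{N}^\perp\oplus\overline{\mathcal{N}^\perp}$. Both factors are totally geodesic, so their curvature tensors are restrictions of the curvature of $\tilde M$; by the first step this curvature vanishes whenever it is contracted into $\mathcal{N}$, so $L_a$ is flat, and completeness of $L_a$ is Ferus's theorem \cite{[F]} on the completeness of the leaves of the nullity foliation. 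Finally $Ric$ is block diagonal with respect to $\mathcal{N}\oplus\mathcal{N}^\perp$ (because $\mathcal{N}=\ker Ric$), hence $Ric(Y_a)=Ric(\tilde M)|_{\mathcal{N}^\perp}$, which is negative definite since $Ric\le 0$ and $\mathcal{N}^\perp\cap\ker(Ric)=0$; and every holomorphic bisectional curvature of $Y_a$ is one of $\tilde M$ and therefore $\le 0$. Assembling these facts gives the theorem.
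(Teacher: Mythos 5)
The paper does not actually prove this statement; it is Corollary~C of Wu--Zheng \cite{[WZ]} and is simply recalled here for later use. So there is no in-paper proof to compare against, and your proposal is in effect an attempt to reconstruct the Wu--Zheng argument. With that understood, the overall skeleton you lay out is the right one: identifying $\ker(Ric)$ with the relative nullity distribution via the algebraic Cauchy--Schwarz inequality and the K\"ahler symmetries, invoking Ferus for completeness of the leaves, and then reducing to a de Rham-type decomposition once the distribution is known to be parallel.

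The genuine gap is exactly where you flag it: the passage from ``$\mathcal{N}$ is the nullity distribution with complete flat leaves'' to ``$\mathcal{N}$ is $\nabla$-parallel in transverse directions.'' You propose to obtain this by differentiating $Ric(v,\bar v)\equiv 0$ twice and hoping the contracted second Bianchi identity makes the curvature corrections cancel, but you do not carry out the computation, and as written it does not close. The first differentiation only yields $(\nabla_X Ric)(v,\cdot)=-Ric(\nabla_X v,\cdot)$, and the second Bianchi identity $\nabla_i R_{j\bar k}=\nabla_j R_{i\bar k}$ shuffles indices without, by itself, forcing $(\nabla_X Ric)(v,\bar w)=0$ for $w\in\mathcal{N}^\perp$. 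This is precisely the content of the ``careful study of the foliation'' in \cite{[WZ]}, where the completeness of the leaves (Ferus) is used not merely to conclude that $L_a$ is complete at the end, but as the essential input --- via the Riccati/conullity equation along the complete nullity leaves and the boundedness coming from compactness of $M$ --- to kill the transverse derivative and obtain parallelness. A proof that omits this, or replaces it with an unverified hope of cancellation, leaves the theorem unproved, since (as you yourself observe with the cone example) a totally geodesic flat nullity foliation does not in general produce a local metric product. To make the argument complete you would need either to exhibit the Bianchi cancellation explicitly or, more in the spirit of \cite{[WZ]}, to run the conullity Riccati ODE along the complete leaves.
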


We apply theorem \ref{thm4} to the compact K\"ahler manifold $M$ in theorem \ref{thm1}. It is immediate to see that $dim(Y_a) = r$ since $Y_a$ has negative Ricci curvature.
Let $f$ be the holomorphic immersion $L_a \to \tilde{M}$ given by theorem \ref{thm4}.
By the evolution equation of the K\"ahler-Ricci flow, 
\begin{equation}\label{5}
\frac{\partial}{\partial t}Ric = \sqrt{-1}\partial\overline\partial R
\end{equation}
 where $R$ is the scalar curvature and $Ric = \sqrt{-1}R_{i\overline{j}}dz^i\wedge dz^{\overline{j}}$.
 Let $p$ be any point in $f(L_a)$. 
 For $e_i\in T^{1,0}_pf(L_a)$,  by theorem \ref{thm-2}, $Ric(g(\epsilon_1)) \leq 0$ and $Ric_{i\overline{i}}(g(0)) = 0$. Therefore \begin{equation}\label{6}
 0 \geq f^*Ric_{i\overline{i}}(g(\epsilon_1))-f^*Ric_{i\overline{i}}(g(0)) = \sqrt{-1}\partial_i\partial_{\overline{i}}\int\limits_0^{\epsilon_1} R(p, t)dt.
 \end{equation} 
 $(\ref{6})$ implies that $-\int\limits_0^{\epsilon_1} R(p, t)dt$ is a bounded plurisubharmonic function on $L_a$. Since $L_a$ is flat, the function must be a constant. Therefore $Ric_{i\overline i}(g(\epsilon_1)) = 0$ for any
 $e_i\in T^{1,0}L_a$. Then $k = $ rank of $Ric(g(\epsilon_1)) \leq n- $dim($L_a$)$ = n-(n-r) = r$. Combining this with (\ref{eq-3}), we find $r = k$. Therefore, for the metric $g=g(0)$, the Ricci curvature is negative somewhere on $Y$.

\medskip
Recall theorem $E$ in \cite{[WZ]} (page 278):
\begin{theorem}
Let $M^n$ be a compact K\"ahler manifold with real analytic metric and nonpositive bisectional curvature. Denote by $r$ its Ricci rank. Then there exists a finite covering $M'$ of $M$, such that $q: M'\to N^r$ is a holomorphic fiber bundle over compact K\"ahler manifold $N^r$ with nonpositive bisectional curvature and $c_1(N)<0$, while the fiber of $q$ is a 
complex $(n-r)$ torus $T$. 

Furthermore, $M'$ is diffeomorphic to $N\times T$, and $q$ is a metric bundle, i.e., $\forall x\in N$, there exists  a small neighborhood $x\in V\subset N$ such that $q^{-1}V$ is isometric to $T\times V$.
 
\end{theorem}

In Wu and Zheng's proof of theorem $E$, the real analyticity condition is only used to show that the universal cover splits as $\mathbb{C}^{n-r}\times Y^r$ where $Y^r$ is a simply connected, complete K\"ahler manifold with nonpositive bisectional curvature, and the Ricci tensor of $Y^r$ is negative definite somewhere. 
Since this is confirmed without assuming the real analyticity of the metric, Wu and Zheng's proof works in our case without any modification.

Next we show $r = k =Kod(M)$.
Recall theorem $6.10(2)$ in \cite{[U]}:
\begin{theorem}
Let $f: V\to W$ be a finite unramified covering of complex manifolds. Then $Kod(V) = Kod(W)$.
\end{theorem}
Since $M'$ is a finite cover of $M$, $Kod(M) = Kod(M')$. Note that pluricanonical sections on $M'$ could be reduced to pluricanonical sections on $N$.  Thus $Kod(M') = Kod(N) = k$, since $c_1(N)<0$. The proof of Theorem \ref{thm1} is complete.
\end{proof}

\section{\bf{The proof of theorem \ref{thm5}}}
\begin{proof}
Let $g(t)$ be the solution to the K\"ahler-Ricci flow with $g(0)=g$. Then by theorem \ref{thm-2}, the Ricci curvature will be nonpositive in a short time. 
By assumption, $N$ is an immersed totally geodesic flat complex submanifold of $M$ and $Ric(M)|_{TN} = 0$. Applying equation $(\ref{6})$, we find that $Ric(M, g(t))|_{TN}$ vanishes for small $t>0$. Then rank$(Ric(g(0))$ = rank$(Ric(g(t))\leq k$.  By corollary \ref{cor-1},  $\tilde{M}$ has a flat factor $\mathbb{C}^{n-k}$.
\end{proof}

Proof of corollary $3$:
Let $r$ be the maximal rank of the Ricci tensor of $M$. We only need to prove that  if the leaf $L^{n-i}$ through $p$ stays in the interior of $U(i)$, then $i = r$.
In this case, $L^{n-i}$ must be a complete totally geodesic immersed complex submanifold in $M$ such that $Ric(M)|_{TL} = 0$. 
By theorem $\ref{thm5}$, the universal cover of $M$ splits off a factor $\mathbb{C}^{n-i}$. Thus $r \leq n-(n-i) = i$.
The proof of corollary $3$ is complete.

\end{document}